\newtheorem{theorem}{Theorem}[section]
\newtheorem{proposition}[theorem]{Proposition}
\newtheorem{corollary}[theorem]{Corollary}
\theoremstyle{definition}
\newtheorem{definition}[theorem]{Definition}
\newtheorem{example}[theorem]{Example}
\newtheorem{remark}[theorem]{Remark}
\numberwithin{equation}{section}
\begin{document}
\setcounter{page}{1}

\vspace*{2.0cm}
\title[High-order tangent vectors]
{High order tangent vectors to sets with applications to constrained optimization problems}\footnote{In memory of Professor Rafail Gabasov}
\author[V.V. Gorokhovik]{ Valentin V. Gorokhovik}
\maketitle
\vspace*{-0.6cm}

\begin{center}
{\footnotesize

Institute of Mathematics, National Academy of Sciences of Belarus, Minsk, Belarus

}\end{center}

\vskip 4mm {\footnotesize \noindent {\bf Abstract.}
We introduce an extended tangent cone of high order to a set and study its properties. Then we use this local approximation for deriving high-order necessary conditions for local minimizers of constrained optimization problems.

 \noindent {\bf Keywords.}
High-order tangent vectors; Fr\'{e}chet differentiability; Constrained optimization.

\noindent {\bf MSC:} 49K27; 90C48}

\renewcommand{\thefootnote}{}
\footnotetext{ $^*$Corresponding author.
\par
E-mail address: gorokh@im.bas-net.by (V.V. Gorokhovik).
\par
Received xx, x, xxxx; Accepted xx, x, xxxx.

\rightline {\tiny   \copyright  2022 Communications in Optimization Theory}}

\section{Introduction}

Notions of first, second and higher order tangent vectors to sets are crucial in variational analysis. A far from complete but very numerous list of (mainly English-language) works devoted to this subject can be found in the review \cite{GIN}, the monograph \cite{KTZ}, and also in relatively recent papers \cite{Khanh_Tung18,Penot17,Tung}. We supplement this list with
works \cite{Bakh_Gor09a,Bakh_Gor09b,GK72,GK73,Gor83,Gor90,Gor05,Gor06a,Gor06b,Gor12,Gor-Rach,Gor_Sim01,Gor_Tr14} related to this area of research, written mainly in Russian and not included in the surveys of the publications mentioned above.

In this paper we focus on one of
possible definitions of high order tangent vectors to sets and consider some applications of this notion to constrained optimization problems. First of all we shortly discuss existing definitions of tangent vectors of first, second and higher order related to the notion that is introduced here.

There are a large number of different first-order tangent vectors to sets (in particular, such as the radial tangent vector, the feasible tangent vector, the contingent vector, the interiorly contingent vector, the adjacent vector, the interiorly adjacent vector) and their second-order counterparts; their definitions, characterizations, examples, and comparisons can be found in \cite{GIN}, \cite[Chapter 4]{KTZ}.
Here we deal only with first- and second-order contingent vectors as well as their extensions to high-order ones. Following  \cite{RW}, we call first-order contingent vectors simply by tangent ones.


Throughout the paper we use the following notations.
By $S$
we denote the family of all sequences of positive
real numbers $(t_n),\,\,t_n>0,\,\, n=1,2,\dots;$ $S(\alpha)$ is
the subfamily of $S,$ consisting of such sequences
$(t_n),$ that converge to a real number $\alpha \ge 0;$ and
$S(\infty)$ is the subfamily of sequences of positive
real numbers converging to $+\infty.$

Let $X$ be a real normed space,  $Q$  a nonempty subset of $X$.

By $U_Q$ we denote the family of all sequences $(x_n)$ such that $x_n \in Q, \,\, n=1,2,\dots;$ $U_Q(x)$ is the subfamily of $U_Q,$ consisting of such sequences $(x_n),$ which converge (by the norm of the space $X$) to $x;$ $U_Q(\infty)$ is the subfamily of sequences $(x_n)$ from $U_Q$ such that $\|x_n\| \to \infty$ as $n \to \infty.$

A vector $h \in X$ is said \cite{RW,Aub_Fr,Bon_Sh,Ioffe,Mord,Penot13} to be {\it a first-order tangent (contingent) vector
to a set $Q$ at a point} $\bar{x} \in {\rm cl}Q,$ if there exist sequences
$(t_n) \in S(0)$ and $(h_n) \in U_X(h)$
such that $\bar{x} + t_nh_n \in Q$ for all $n=1,2,\dots.$

The set of all first-order tangent (contingent) vectors to a set $Q \subset X$ at a point $\bar{x} \in {\rm cl}Q$ forms a closed cone which is denoted by $TQ(\bar{x})$ and is called {\it the (first-order) tangent (contingent) cone
to the set $Q$ at the point $\bar{x} \in {\rm cl}Q.$}

The tangent (contingent) cone $TQ(\bar{x})$ is often called \textit{the Bouligand
cone} \cite{Aub_Fr,Mord}, since in the thirties of the last century elements of this cone were considered by the French mathematician Georges Bouligand \cite{Boul30,Boul32}. In the literature the cone $TQ(\bar{x})$ had also been called  \textit{the cone of
directions admissible under constraints} \cite{Dub_Mil65}, \textit{the cone of directions admissible in the broad sense} \cite{Dem_Rub68},
\textit{the cone of possible directions} \cite{Dem_Rub90,Dem_Rub95}.

Recall, that a set $K \subset X$ is called \textit{a cone} if the following implication $(x \in K,\,\lambda > 0) \Rightarrow \lambda x \in K$ holds.

It follows directly from the definition that
\begin{equation*}\label{e1.1}
TQ(\bar{x}) = \limsup_{t \to 0,\,t>0}\frac{Q-\bar{x}}{t},
\end{equation*}
where the \textit{limsup} is the sequential Painlev\'{e}-Kuratowski upper/outer limit \cite{Kurat} (with respect to the norm topology of $X$) of the set-valued mapping $\displaystyle t \rightrightarrows \frac{Q-\bar{x}}{t}$ as $t \to 0_+$ ($t \to 0_+$ means $t \to 0,t>0$).

Furthermore, $h \in TQ(\bar{x})$ if and only if $\liminf\limits_{t \to 0,\,t>0}\displaystyle\frac{d(\bar{x}+th,Q)}{t} =0$, where $d(x,Q):=\inf\limits_{y \in Q}\|x-y\|$ is the distance from a point $x$ to a set $Q.$

In the finite-dimensional setting each sequence $(x_n) \subset Q$ that converges to a point $\bar{x} \in {\rm cl}Q$  generates one or even several first order tangent vectors to the set $Q$ at the point $\bar{x}$. Due to this fact, using the first order tangent cone as a local approximation of the set of feasible points of an optimization problem, one can derive both necessary optimality conditions and sufficient ones of first order for solutions of an optimization problem under question, the gap between which is minimal in the sense that it cannot be eliminated  by means of first order local approximations. To reduce this gap one needs to use local approximations of the second and higher order.

Local approximations of the second and, moreover, arbitrary order to sets, called \textit{variational sets}, were firstly introduced by Hoffmann and Kornstaedt in \cite{Hoff_Korn}. Somewhat later, under the name \textit{high-order tangent sets}, they were considered in the monograph \cite{Aub_Fr}. In subsequent years various high-order variational (contingent, adjacent and others) sets and based on them definitions of high-order derivatives for (set-valued) mappings with applications to high-order optimality conditions were the subject of many papers (see  \cite{Khanh_Tung18,Anh11,Anh14,Constantin,Gollan,Khanh_Tuan08,Ledz_Schaet,Li-Chen,Luu,Pavel,Stud} and references therein).  Following \cite{Gor06b}, in the present paper we refer to high order variational sets introduced by Hoffmann and Kornstaedt in \cite{Hoff_Korn} as high order proper tangent sets.

It is natural that the attention of many researchers was focused on second-order tangent sets and their applications to optimization problems \cite{Bakh_Gor09a,Bakh_Gor09b,Gor90,Gor05,Gor06a,Gor06b,Gor12,Gor-Rach,Gor_Tr14,Bon_Sh,Bigi_Castellani99,BonComSha,BCSha99,CMV,Cast,Cominetti,Kawasaki,Penot94,Penot98,Penot2000,Stud91}.
It was observed that for some sets $Q$ there can exist such sequences $(x_n) \subset Q$ converging to a point $\bar{x} \in {\rm cl}Q$ that generates none second-order proper tangent vector to $Q$ at $\bar{x}$ and, moreover, the second-order  proper tangent sets to some sets may be empty (an example of such set can be found in \cite{Gor06b,Bon_Sh,BCSha99,CMV,Penot98}). In the case when the second-order proper tangent set to a feasible set of a constrained optimization problem at a reference point is nonempty one can formulate a second-order necessary optimality condition \cite{Gor-Rach,Bigi_Castellani99}, however, corresponding sufficient condition can be obtain only under an additional second-order regularity condition \cite{Bakh_Gor09b,Bon_Sh,BCSha99,Bigi06}.
When the second-order proper tangent set is empty it gives no information on local structure of set and can't be used for analysis.

To overcome this disadvantage in \cite{CMV,Penot98,Penot2000} the second-order proper tangent set was supplemented with a new object, called the asymptotic second order tangent cone. An alternative definition of asymptotic second order tangent vectors was given in
\cite{Gor05,Gor06a}. For a number of (scalar, vector, set-valued) optimization problems this allowed, at least in finite-dimensional settings, to obtain both necessary optimality conditions and sufficient optimality conditions with a minimal gap between them \cite{Bakh_Gor09a,Bakh_Gor09b,Bon_Sh,BCSha99}. In \cite{Gor06b} and then in \cite{Penot17} it has been introduced the second order extended tangent cone (in \cite{Penot17} it was called the high-order tangent cone) including both the second-order proper set and the asymptotic second-order tangent cone as its subsets. It should be noted that the main constituents of the second-order extended tangent cone were considered in \cite{CMV}.
In Section 2 of the present paper, following the scheme of defining second-order extended tangent cone of the paper \cite{Gor06b} and the terminology adopted there, we introduce the high-order extended tangent cone  and study its properties. In Section 3 we use this local approximation for deriving high-order necessary conditions for local minimizers of constrained optimization problems.

\section{High order extended tangent vectors}

First we recall the definition of the variational contingent set of the $k$-th order that was given in \cite{Hoff_Korn}. Following the terminology of the paper \cite{Gor06b}, we replace the name of $k$-th order ($k \ge 2$) variational contingent set with that of \textit{$k$-th order proper tangent set}, and the vectors which belong to it we call \textit{$k$-th order proper tangent vectors}.

Let $Q$ be a set in a real normed space $X$, $\bar{x}\in {\rm cl}Q$, $k \in {\mathbb N},\,k \ge 2$, and let $(h_1, h_2, \ldots, h_{k-1}) \in
X^{k-1}:=\underbrace{X\times X\times \ldots \times
X}\limits_{k-1}$ be an ordered collection of vectors from $X.$

\begin{definition}\label{d2.1}(cf. \cite{Aub_Fr,Hoff_Korn})
A vector $w \in X$ is called {\it a $k$-th order proper tangent vector to a set $Q$ at a point $\bar{x} \in {\rm cl}Q$ with respect to an ordered collection of directions $(h_1, h_2, \ldots, h_{k-1}) \in X^{k-1}$}, if there exist sequences
$(t_n)\in S(0)$ and $(w_n)\in U_X(w)$ such that
$\bar{x}+t_nh_1+t^2_nh_2 + \ldots+
t_n^{k-1}h_{k-1}+t_n^{k}w_n\in Q$ for all $n = 1,2,\ldots~.$
\end{definition}

The set of all $k$-th order proper tangent vectors to a set
$Q \subset X$ at a point $\bar{x} \in {\rm cl}Q$ with respect to an ordered collection of directions $(h_1, h_2, \ldots, h_{k-1}) \in X^{k-1}$
is denoted by $T^k_{pr}Q(\bar{x},h_1,
\ldots,  h_{k-1})$ and is called \textit{$k$-th order proper tangent set to a set
$Q \subset X$ at a point $\bar{x} \in {\rm cl}Q$ with respect to an ordered collection of directions} $(h_1, h_2, \ldots, h_{k-1}) \in X^{k-1}$.

It follows from the equality
$$ 
T^k_{pr}Q(\bar{x},h_1, \ldots,  h_{k-1}) = \limsup\limits_{t \to +0}\frac{Q-\bar{x}-th_1-t^2h_2 - \ldots -
t^{k-1}h_{k-1}}{t^{k}}
$$
and from properties of the sequential Painlev\'{e}-Kuratowski upper/outer limit (with respect to
the norm topology of $X$) that $T^k_{pr}Q(\bar{x},h_1, \ldots,  h_{k-1})$ is a closed set in $X$. Observe that \linebreak  $T^k_{pr}Q(\bar{x},h_1, \ldots,  h_{k-1})$ can be empty.

\begin{example}{\rm \cite{Gor06b,CMV,Penot98}.}\label{ex1}
Let $Q =\{(x_1,x_2) \in {\mathbb{R}}^2 \mid x_1 \ge 0, x_1^2 = x_2^3\}$ and $\bar{x} = (0,0)$. Then, it is not difficult to get by direct calculation that  $TQ(0) = \{(h_1,h_2) \in {\mathbb{R}}^2 \mid h_1 = 0, h_2 \ge 0\}$ and $T^2_{pr}Q(0,h) = \varnothing$ for all $h \in TQ(0), h \ne 0$.
\end{example}

More properties of the high-order proper tangent set of an arbitrary order can be found in the papers \cite{GIN,Hoff_Korn} and in the monographs \cite{Aub_Fr, KTZ}. Here these properties are not presented since they follows from the properties of the extended tangent cone of high order whose definition and properties are the purpose of the present paper.

\begin{definition}\label{d2.2}
A vector $w \in X$ is called {\it an extended tangent vector of $k$-th order to a set $Q$ at a point $\bar{x} \in {\rm cl}Q$ with respect to an ordered collection of directions $(h_1, h_2, \ldots, h_{k-1}) \in X^{k-1}$}, if there exist sequences
$(t_n),\,(\tau_n)\in S(0)$ and $(w_n)\in U_X(w)$ such that
$\bar{x}+t_nh_1+t^2_nh_2 + \ldots+
t_n^{k-1}h_{k-1}+t_n^{k-1}\tau_nw_n\in Q$ for all $n =1,2,\ldots~.$
\end{definition}

The set of all extended tangent vectors of $k$-th order to a set $Q$ at a point $\bar{x} \in {\rm cl}Q$ with respect to an ordered collection of directions $(h_1, h_2, \ldots, h_{k-1}) \in X^{k-1}$ is a cone, which is denoted by $T^kQ(\bar{x},h_1,
\ldots,  h_{k-1})$ and is called {\it the extended tangent cone of $k$-th order to a set $Q$ at a point $\bar{x} \in {\rm cl}Q$ with respect to an ordered collection of directions $(h_1, h_2, \ldots, h_{k-1}) \in X^{k-1}$}.

The following equalities hold:
\begin{equation}\label{e2.1}
T^kQ(\bar{x},h_1, \ldots,  h_{k-1}) = \limsup\limits_{(t,\tau) \to (+0,+0)}\frac{Q-\bar{x}-th_1-t^2h_2 - \ldots -
t^{k-1}h_{k-1}}{t^{k-1}\tau}
\end{equation}
and
$$
T^kQ(\bar{x},h_1, \ldots,  h_{k-1}) = \left\{w \in X \mid \liminf\limits_{(t,\tau) \to (+0,+0)}\frac{d(\bar{x}+th_1 + \ldots
+ t^{k-1}h_{k-1} + t^{k-1}\tau w,Q)}{t^{k-1}\tau}=0\right\}.
$$

It follows from Definition \ref{d2.2} and the equality \eqref{e2.1} that $T^kQ(\bar{x},h_1, \ldots,  h_{k-1})$ is a closed cone in $X$.

\smallskip
Clearly, that $T^k_{pr}Q(\bar{x},h_1, \ldots,  h_{k-1}) \subset T^kQ(\bar{x},h_1, \ldots,  h_{k-1})$.

\vspace{3mm}

In the next propositions we present a number of properties of the extended tangent cone of $k$-th order.

\smallskip

\begin{proposition}\label{pr1}  $$T^kQ(\bar{x},h_1, \ldots,  h_{k-1}) \neq \varnothing \
\Rightarrow $$  $$h_{k-1}\in T^{k-1}_{pr}Q(\bar{x},h_1, \ldots,  h_{k-2}),
 h_{k-2}\in T^{k-2}_{pr}Q(\bar{x},h_1, \ldots,  h_{k-3}), \ldots, h_2 \in T^2_{pr}Q(\bar{x},h_1), h_1 \in TQ(\bar{x}).$$

 When ${\rm dim}\,X < \infty$, the converse implication also is true: for an $\bar{x} \in {\rm cl}Q$ and an ordered collection of directions $(h_1,\ldots,h_{k-1}) \in X^{k-1}$ such that $h_{k-1}\in T^{k-1}_{pr}Q(\bar{x},h_1, \ldots,  h_{k-2})$ $($and, hence, such that $h_{k-2}\in T^{k-2}_{pr}Q(\bar{x},h_1, \ldots,  h_{k-3}),$ $h_{k-3}\in T^{k-2}_{pr}Q(\bar{x},h_1, \ldots,  h_{k-4}), \ldots, h_2 \in T^2_{pr}Q(\bar{x},h_1), h_1 \in TQ(\bar{x}))$, one has $T^kQ(\bar{x},h_1, \ldots,  h_{k-1}) \neq \varnothing$.
 \end{proposition}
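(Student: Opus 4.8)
The plan is to prove the two implications separately, the forward one in an arbitrary normed space and the converse only under $\dim X<\infty$. For the forward implication I would start from an arbitrary $w\in T^kQ(\bar{x},h_1,\ldots,h_{k-1})$, with witnessing sequences $(t_n),(\tau_n)\in S(0)$ and $(w_n)\in U_X(w)$ as in Definition \ref{d2.2}. The whole point is to \emph{absorb the perturbation into the top-order coefficient}: since
\[
\bar{x}+t_nh_1+\cdots+t_n^{k-1}h_{k-1}+t_n^{k-1}\tau_nw_n
=\bar{x}+t_nh_1+\cdots+t_n^{k-2}h_{k-2}+t_n^{k-1}(h_{k-1}+\tau_nw_n),
\]
and $v_n:=h_{k-1}+\tau_nw_n\to h_{k-1}$ (because $\tau_n\to 0$ while $(w_n)$ is convergent, hence bounded), the pair $(t_n),(v_n)$ witnesses through Definition \ref{d2.1} that $h_{k-1}\in T^{k-1}_{pr}Q(\bar{x},h_1,\ldots,h_{k-2})$.

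The remaining links in the chain I would obtain from a single \emph{descent} observation for proper tangent sets: whenever $T^{j}_{pr}Q(\bar{x},h_1,\ldots,h_{j-1})\neq\varnothing$ one has $h_{j-1}\in T^{j-1}_{pr}Q(\bar{x},h_1,\ldots,h_{j-2})$. This is the very same absorption — pick any $u$ in the set with its witness $\bar{x}+t_nh_1+\cdots+t_n^{j-1}h_{j-1}+t_n^{j}u_n\in Q$, $u_n\to u$, rewrite the last two terms as $t_n^{j-1}(h_{j-1}+t_nu_n)$, and note $h_{j-1}+t_nu_n\to h_{j-1}$. Starting from $h_{k-1}\in T^{k-1}_{pr}Q(\bar{x},h_1,\ldots,h_{k-2})$ (which makes that set nonempty) and iterating for $j=k-1,k-2,\ldots,2$ peels off $h_{k-2},\ldots,h_2$ in turn; the final step $T^2_{pr}Q(\bar{x},h_1)\neq\varnothing\Rightarrow h_1\in TQ(\bar{x})$ is again the same absorption, now applied to the first-order definition via $t_nh_1+t_n^2u_n=t_n(h_1+t_nu_n)$. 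This settles the forward implication in any normed space.

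For the converse, assume $\dim X<\infty$ and $h_{k-1}\in T^{k-1}_{pr}Q(\bar{x},h_1,\ldots,h_{k-2})$; by Definition \ref{d2.1} there are $(t_n)\in S(0)$ and $(v_n)\to h_{k-1}$ with $\bar{x}+t_nh_1+\cdots+t_n^{k-2}h_{k-2}+t_n^{k-1}v_n\in Q$. Writing $\delta_n:=v_n-h_{k-1}\to 0$, this point equals $\bar{x}+t_nh_1+\cdots+t_n^{k-1}h_{k-1}+t_n^{k-1}\delta_n$, so to produce an element of $T^kQ(\bar{x},h_1,\ldots,h_{k-1})$ it suffices to \emph{factor} $\delta_n=\tau_nw_n$ with $(\tau_n)\in S(0)$ and $(w_n)$ convergent. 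If $\delta_n=0$ for all large $n$, take $w_n=0$; otherwise, along the indices with $\delta_n\neq0$, set $\tau_n:=\|\delta_n\|$ and $w_n:=\delta_n/\|\delta_n\|$, so $(w_n)$ lies on the unit sphere. Finite-dimensionality enters exactly here: the unit sphere of $X$ is compact, so a subsequence of $(w_n)$ converges to some unit vector $w$, and along it $(t_n),(\tau_n),(w_n)$ witness $w\in T^kQ(\bar{x},h_1,\ldots,h_{k-1})$.

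The main obstacle is precisely this factorization in the converse: the normalized perturbations $\delta_n/\|\delta_n\|$ need not converge, and only the sequential compactness of the unit sphere in finite dimensions supplies a convergent subsequence — the step that fails in a general normed space and accounts for the dimensional restriction. I would also remark that nonemptiness alone can be salvaged without compactness by instead choosing $\tau_n=\|\delta_n\|^{1/2}$, $w_n=\delta_n/\|\delta_n\|^{1/2}$, which forces $w_n\to0$ and yields $0\in T^kQ(\bar{x},h_1,\ldots,h_{k-1})$; finite-dimensionality is what upgrades this to a genuine unit-length tangent vector.
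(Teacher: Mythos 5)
Your proof is correct and rests on the same two ideas as the paper's: term absorption for the forward implication, and normalization plus compactness of the unit sphere for the converse. The structural differences are still worth recording. For the forward direction the paper reads off all the memberships simultaneously from the single witnessing sequence, rewriting it at each level $s$ as $\bar{x}+t_nh_1+\cdots+t_n^{s-1}h_{s-1}+t_n^{s}w'_n$ with $w'_n=h_s+t_nh_{s+1}+\cdots+t_n^{k-1-s}h_{k-1}+t_n^{k-1-s}\tau_nw_n\to h_s$; you instead extract only $h_{k-1}\in T^{k-1}_{pr}Q(\bar{x},h_1,\ldots,h_{k-2})$ and then iterate a descent lemma for proper tangent sets ($T^{j}_{pr}Q(\bar{x},h_1,\ldots,h_{j-1})\neq\varnothing\Rightarrow h_{j-1}\in T^{j-1}_{pr}Q(\bar{x},h_1,\ldots,h_{j-2})$). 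These are equivalent in substance, and your modular variant has the small advantage of isolating exactly the fact that justifies the parenthetical ``and, hence, \ldots'' in the statement and in Definition \ref{d3}. More significant is your closing remark: the factorization $\delta_n=\tau_n w_n$ with $\tau_n=\|\delta_n\|^{1/2}$ and $w_n=\delta_n/\|\delta_n\|^{1/2}\to 0$ is correct and shows that $0\in T^kQ(\bar{x},h_1,\ldots,h_{k-1})$ for \emph{every} admissible collection in an \emph{arbitrary} normed space, so the converse implication as literally stated (mere nonemptiness) does not require $\dim X<\infty$ at all. This goes beyond the paper, whose dimension-free observation (Remark \ref{r1}) covers only polynomially admissible collections and places $0$ in $T^k_{pr}Q$ rather than in the extended cone. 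What finite-dimensionality genuinely buys, in your argument exactly as in the paper's, is a \emph{nonzero} (indeed unit-norm) extended tangent vector, which is the informative conclusion for applications, since $w=0$ makes the resulting optimality conditions vacuous.
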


\begin{proof}
 Let $T^kQ(\bar{x},h_1, \ldots,  h_{k-1}) \neq \varnothing$ and let
 $w \in T^kQ(\bar{x},h_1, \ldots,  h_{k-1}).$
 Then there exist sequences $(t_n),\,(\tau_n)\in S(0)$ and $(w_n)\in U_X(w)$ such that
 $x_n := \bar{x}+t_nh_1 + \ldots + t_n^{k-1}h_{k-1} + t_n^{k-1}\tau_n w_n \in Q\,\,\forall\,\,n.$ For each $s = 2,\ldots,k-1$ we define the sequence $(w'_n) = (h_{s} + t_n h_{s+1}+ \ldots + t_n^{k-s-1}h_{k-1} + + t_n^{k-s-1}\tau_nw_n).$ Since $w'_n \to h_{s}$ and
$x_n := \bar{x}+t_nh_1 + \ldots + t_n^{s-1}h_{s-1} +t_n^{s}w'_n \in Q\,\,\forall\,\,n,$ we obtain $h_{s} \in T^{s}_{pr}Q(\bar{x},h_1, \ldots,  h_{s-1})$ for $s=2,\ldots,k-1.$ The proof of the condition $h_1 \in TQ(\bar{x})$ is similar.

To prove the converse statement, consider a point $\bar{x} \in {\rm cl}Q$ and an ordered collection of directions $(h_1,\ldots,h_{k-1}) \in X^{k-1}$ such that $h_{k-1}\in T^{k-1}_{pr}Q(\bar{x},h_1, \ldots,  h_{k-2})$. Then there exist sequences $(t_n) \in S(0)$ and $(w_n) \in U(h_{k-1})$ such that $x_n := \bar{x} + t_nh_1 + $  $+t^2_nh_2 + \ldots +t_n^{k-2}h_{k-2} + t_n^{k-1}w_n \in Q\,\,\forall\,\,n.$

When the sequence $(w_n)$ is such that $w_n = h_{k-1}$ for infinitely many $n \in {\mathbb N}$, then, passing to a subsequence if necessary, we can suppose without loss of generality that $w_n=h_{k-1}$ for all $n$ and hence 
$x_n := \bar{x} + t_nh_1 + t^2_nh_2 + \ldots +t_n^{k-2}h_{k-2} + t_n^{k-1}h_{k-1} + t_n^k\cdot 0 \in Q\,\,\forall\,\,n.$ We conclude from this that $0 \in T^{k}_{pr}Q(\bar{x},h_1, \ldots,  h_{k-1}).$ Thus, since $T^k_{pr}Q(\bar{x},h_1, \ldots,  h_{k-1}) \subset T^kQ(\bar{x},h_1, \ldots,  h_{k-1})$, we have $T^{k}Q(\bar{x},h_1, \ldots,  h_{k-1}) \ne \varnothing$ regardless of whether the space $X$ is finite-dimensional or not.

When $w_n =h_{k-1}$ only for finite many $n$ then without loss of generality we can assume that $w_n \ne h_{k-1}$ for all $n$. Consider the sequence  $w'_n := (\|w_n -h_{k-1}\|)^{-1}(w_n -h_{k-1})$. Since ${\rm dim}X < \infty$ we can choose a subsequence from the sequence $(w'_n)$ that converges to some vector $w,\,\|w\|=1.$ Without loss of generality, we can assume that the sequence $(w'_n)$ itself converges to $w.$  Setting $\tau_n = \|w_n- h_{k-1}\|$, we obtain $$x_n := \bar{x} + t_nh_1 + \ldots +t_n^{k-2}h_{k-2} + t_n^{k-1}w_n = \bar{x} + t_nh_1 + \ldots +t_n^{k-2}h_{k-2} + t_n^{k-1}h_{k-1} + t_n^{k-1}(w_n - h_{k-1}) = $$ $$=\bar{x} + t_nh_1 + \ldots +t_n^{k-2}h_{k-2} + t_n^{k-1}h_{k-1} + t_n^{k-1}\tau_nw'_n \in Q\,\,\forall\,\,n.$$
Since $(t_n),(\tau_n) \in S(0)$ and $(w'_n) \in U(w)$, we conclude that $w \in T^kQ(\bar{x},h_1, \ldots,  h_{k-1}),$ and \linebreak $w \ne 0.$ \end{proof}

\smallskip

\begin{definition}\label{d3} An ordered collection of directions $(h_1,\ldots,h_{k-2},h_{k-1}) \in X^{k-1}$ is called {\it admissible} for a set $Q$ at a point $\bar{x} \in {\rm cl} Q$, if $h_{k-1}\in T^{k-1}_{pr}Q(\bar{x},h_1, \ldots,  h_{k-2})$ (and, consequently,
$h_{k-2}\in T^{k-2}_{pr}Q(\bar{x},h_1, \ldots,  h_{k-3}), \ldots, h_2 \in T^2_{pr}Q(\bar{x},h_1), h_1 \in TQ(\bar{x})$).

An ordered collection of directions  $(h_1,h_2\ldots,h_{k-1}) \in X^{k-1}$ for which we can find such a sequence $(t_n) \in S(0)$, that  $\bar{x} + t_nh_1 + t^2_nh_2+\ldots + t_n^{k-1}h_{k-1} \in Q\,\,\forall\,\,n,$ will be called {\it polynomially admissible.}
\end{definition}

 \begin{remark}\label{r1}
 In the above proof of Proposition \ref{pr1} it is actually proved that for any ordered collection of directions $(h_1,h_2,\ldots,h_{k-1}) \in X^{k-1}$ that is polynomially admissible for $Q \subset X$ at a point $\bar{x} \in {\rm cl}\,Q$ one has $0 \in T^{k}_{pr}Q(\bar{x},h_1, \ldots,  h_{k-1})$ regardless of whether the space $X$ is finite-dimensional or not.
In the case when ${\rm dim}\,X < \infty$ for each admissible ordered collection of directions $(h_1,h_2,\ldots,h_{k-1}) \in X^{k-1}$ which is not polynomially admissible for $Q$ at a point $\bar{x} \in \textrm{cl}\,Q$ the extended tangent cone
$T^{k}Q(\bar{x},h_1, \ldots,  h_{k-1})$ is nonempty as well and, moreover, it contains a nonzero vector $w.$
\end{remark}

\smallskip

\begin{remark}\label{r2}
 It follows from Proposition \ref{pr1} that if for some a set $Q$, a point $\bar{x} \in Q$, and an ordered collection of directions  $(\bar{h}_1,\bar{h}_2,\ldots,\bar{h}_{k-1}) \in X^{k-1}$ ($k \ge 2$) the equality \linebreak $T^{k}_{pr}Q(\bar{x},\bar{h}_1, \ldots, \bar{h}_{k-1}) = \varnothing$ holds, then
$T^{m}Q(\bar{x},\bar{h}_1, \ldots, \bar{h}_{k-1}, h_k, \ldots, h_{m-1}) =\varnothing$ for all vectors $h_k, \ldots, h_{m-1} \in X$ and all $m > k$. In particular, since for the set $Q =\{(x_1,x_2) \in {\mathbb{R}}^2 \mid x_1 \ge 0, x_1^2 = x_2^3\}$, a point $\bar{x} = (0,0)$, and any nonzero tangent vector $h = (h_1,h_2) \in TQ(0)$ one has $T^2_{pr}Q(0,h) = \varnothing$ (see Example \ref{ex1}) then $T^{m}Q(0,{h},h_2, \ldots, h_{m-1}) = \varnothing$ for all $h_2, \ldots, h_{m-1} \in {\mathbb{R}}^2$ and all $m > 2$.
\end{remark}

\smallskip

\begin{proposition}\label{pr2} {\it For any $\bar{x} \in {\rm cl}Q$ the following equality holds}:
    $$T^kQ(\bar{x}, \underbrace{0, \ldots,  0}\limits_{k-1}) =
TQ(\bar{x})\quad \forall\,\, k\ge 2.$$
\end{proposition}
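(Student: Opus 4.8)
The plan is to prove the two inclusions separately, working directly from Definition \ref{d2.2} after substituting $h_1 = \cdots = h_{k-1} = 0$. With all the directions set to zero, the defining condition reduces to the existence of sequences $(t_n),(\tau_n) \in S(0)$ and $(w_n) \in U_X(w)$ with $\bar{x} + t_n^{k-1}\tau_n w_n \in Q$ for all $n$. The whole argument then hinges on treating the scalar factor $t_n^{k-1}\tau_n$ as a single positive null sequence and comparing it with the single positive null sequence that appears in the definition of the first-order tangent cone.

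For the inclusion $T^kQ(\bar{x},0,\ldots,0) \subseteq TQ(\bar{x})$, I would take $w$ in the left-hand side, obtain the sequences above, and simply set $s_n := t_n^{k-1}\tau_n$. Since $t_n > 0$ and $\tau_n > 0$ we have $s_n > 0$, and since $t_n \to 0$, $\tau_n \to 0$ we have $s_n \to 0$, so $(s_n) \in S(0)$. The membership relation becomes $\bar{x} + s_n w_n \in Q$ with $w_n \to w$, which is exactly the definition of $w \in TQ(\bar{x})$.

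For the reverse inclusion, the only point requiring a small idea is to factor a given positive null sequence back into the form $t_n^{k-1}\tau_n$. Starting from $w \in TQ(\bar{x})$, there exist $(s_n) \in S(0)$ and $(w_n) \in U_X(w)$ with $\bar{x} + s_n w_n \in Q$. I would set $t_n := \tau_n := s_n^{1/k}$; then $t_n,\tau_n > 0$, both tend to $0$, and $t_n^{k-1}\tau_n = s_n^{(k-1)/k}\cdot s_n^{1/k} = s_n$. Substituting, $\bar{x} + t_n^{k-1}\tau_n w_n = \bar{x} + s_n w_n \in Q$, so $w \in T^kQ(\bar{x},0,\ldots,0)$ by Definition \ref{d2.2}.

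I do not expect any genuine obstacle: the statement amounts to the observation that the aggregate positive scalar $t^{k-1}\tau$ ranges over all ways of tending to $0^+$, independently of $k$, so replacing it by a single parameter loses nothing. The only place to be slightly careful is the reverse factorization, where one must verify that both chosen factors are positive and vanish; the symmetric choice $t_n = \tau_n = s_n^{1/k}$ achieves this uniformly in $k$. It is worth emphasizing, in contrast to Proposition \ref{pr1}, that no finite-dimensionality of $X$ is used anywhere in this argument, so the equality holds in every real normed space.
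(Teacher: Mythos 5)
Your proof is correct and follows essentially the same route as the paper: the forward inclusion collapses $t_n^{k-1}\tau_n$ into a single null sequence, and the reverse inclusion uses exactly the paper's factorization $t_n=\tau_n=s_n^{1/k}$ (the paper phrases this as $w\in T^k_{pr}Q(\bar{x},0,\ldots,0)\subset T^kQ(\bar{x},0,\ldots,0)$, but the computation is identical). No gaps; your closing remark that the argument is dimension-free is also consistent with the paper's proof.
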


\begin{proof} For a $w \in T^kQ(\bar{x}, \underbrace{0, \ldots,  0}\limits_{k-1})$, there exist sequences $(t_n),\,(\tau_n) \in S(0)$ and $(w_n) \in U(w)$ such that $x_n:=\bar{x} + t_n0+t_n^2 0 + \ldots + t_n^{k-1} 0 + t_n^{k-1}\tau_n w_n \in Q\,\,\forall\,\,n.$ Setting ${t'}_n = t_n^{k-1}\tau_n$, we obtain $x_n=\bar{x} + {t'}_n w_n \in Q\,\,\forall\,\,n,$ with $({t'}_n) \in S(0)$ and $(w_n) \in U(w).$ Hence, $w \in TQ(\bar{x}).$

Conversely, let $w \in TQ(\bar{x})$ and let $(t_n) \in S(0)$ and $(w_n) \in U(w)$ be such sequences for which $x_n:=\bar{x} + t_nw_n \in Q\,\,\forall\,\,n.$
Then $x_n =\bar{x} + {t'}_n 0 + {t'}_n^2 0 + \ldots + {t'}_n^{k-1} 0 +  {t'}_n^{k} w_n \in Q\,\,\forall\,\,n,$ where ${t'}_n = \sqrt[k]{t_n} \to 0.$ Consequently, $w \in T^k_{pr}Q(\bar{x}, \underbrace{0, \ldots,  0}\limits_{k-1}) \subset T^kQ(\bar{x}, \underbrace{0, \ldots,  0}\limits_{k-1}).$ \end{proof}

\begin{proposition}\label{pr3} {\it For any $\bar{x} \in {\rm cl}Q$ and any $h \in TQ(\bar{x})$ the following equality holds}:
  \begin{equation}\label{e2.2}
  T^kQ(\bar{x}, \underbrace{0, \ldots,
0}\limits_{k-2},h) = T^2Q(\bar{x}, h)\,\forall\,\,
k \ge 2.
\end{equation}
\end{proposition}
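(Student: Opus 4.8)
The plan is to reduce both cones to one and the same membership condition by a simple reparametrization of the scale variable, in the spirit of the proofs of Propositions \ref{pr1} and \ref{pr2}. First I would write out Definition \ref{d2.2} for the left-hand side with $h_1 = \ldots = h_{k-2} = 0$ and $h_{k-1} = h$: a vector $w$ belongs to $T^kQ(\bar{x},0,\ldots,0,h)$ if and only if there are sequences $(t_n),(\tau_n) \in S(0)$ and $(w_n) \in U_X(w)$ with
$$\bar{x} + t_n^{k-1} h + t_n^{k-1}\tau_n w_n \in Q \quad \forall\,n,$$
all intermediate polynomial terms having collapsed because their coefficients vanish. For the right-hand side, Definition \ref{d2.2} with $k=2$ and $h_1 = h$ gives $w \in T^2Q(\bar{x},h)$ if and only if there are $(s_n),(\sigma_n) \in S(0)$ and $(v_n) \in U_X(w)$ with $\bar{x} + s_n h + s_n \sigma_n v_n \in Q$ for all $n$.

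The key observation is that these two conditions become identical once one sets $s_n = t_n^{k-1}$. For the inclusion $T^kQ(\bar{x},0,\ldots,0,h) \subset T^2Q(\bar{x},h)$, I would start from the displayed membership and put $s_n := t_n^{k-1}$, $\sigma_n := \tau_n$, $v_n := w_n$; since $t \mapsto t^{k-1}$ is a strictly increasing bijection of $(0,+\infty)$ onto itself with $t_n \to 0^{+} \Leftrightarrow t_n^{k-1} \to 0^{+}$, the sequence $(s_n)$ lies in $S(0)$, and the membership relation becomes $\bar{x} + s_n h + s_n \sigma_n v_n \in Q$, so $w \in T^2Q(\bar{x},h)$. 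For the reverse inclusion I would run the same substitution backwards, setting $t_n := \sqrt[k-1]{s_n}$ (again a sequence in $S(0)$), $\tau_n := \sigma_n$, $w_n := v_n$, so that $t_n^{k-1} = s_n$ and $\bar{x} + s_n h + s_n \sigma_n v_n \in Q$ reads $\bar{x} + t_n^{k-1} h + t_n^{k-1}\tau_n w_n \in Q$, giving $w \in T^kQ(\bar{x},0,\ldots,0,h)$. This establishes \eqref{e2.2}.

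I do not expect a genuine obstacle here; the only point requiring care is the bookkeeping that the reparametrization $t \mapsto t^{k-1}$ preserves both positivity and convergence to $0$ within the family $S(0)$, which is immediate. It is worth noting that the hypothesis $h \in TQ(\bar{x})$ is not actually needed in the argument: the reparametrization yields the equality for every $h \in X$, and when $h \notin TQ(\bar{x})$ both sides are empty by Proposition \ref{pr1}. Alternatively, the same proof can be phrased through the Painlev\'e--Kuratowski formula \eqref{e2.1}, where the substitution $s = t^{k-1}$ directly identifies the two outer limits $\limsup_{(t,\tau)\to(+0,+0)}(Q-\bar{x}-t^{k-1}h)/(t^{k-1}\tau)$ and $\limsup_{(s,\tau)\to(+0,+0)}(Q-\bar{x}-sh)/(s\tau)$.
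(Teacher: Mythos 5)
Your proof is correct and coincides with the paper's own argument: both directions use exactly the same reparametrization, setting $t'_n = t_n^{k-1}$ to pass from $T^kQ(\bar{x},0,\ldots,0,h)$ to $T^2Q(\bar{x},h)$ and $t'_n = \sqrt[k-1]{t_n}$ for the converse, with the scaling sequences $(\tau_n)$ and approximating sequences $(w_n)$ carried over unchanged. Your additional observations (that $h \in TQ(\bar{x})$ is not actually used, and that the argument can be recast via the outer-limit formula \eqref{e2.1}) are sound but go beyond what the paper records.
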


\begin{proof} For a $w \in T^kQ(\bar{x}, \underbrace{0, \ldots,  0}\limits_{k-2}, h)$ there exist sequences $(t_n),\,(\tau_n) \in S(0)$ and $(w_n) \in U(w)$ such that $x_n:=\bar{x} + t_n0 + \ldots + t_n^{k-2} 0 + t_n^{k-1} h + t_n^{k-1}\tau_n w_n \in Q\,\,\forall\,\,n.$ Setting ${t'}_n = t_n^{k-1}$ and removing null summands we obtain $x_n=\bar{x} + {t'}_nh + {t'}_n\tau_n w_n \in Q\,\,\forall\,\,n,$ with $({t'}_n),\,(\tau_n) \in S(0)$ and $(w_n) \in U(w).$ Hence, $w \in T^2Q(\bar{x},h).$

Conversely, let $w \in T^2Q(\bar{x},h)$ and let $x_n:=\bar{x} + {t}_nh + {t}_n\tau_n w_n \in Q\,\,\forall\,\,n,$ where $({t}_n),\,(\tau_n) \in S(0)$ and $(w_n) \in U(w).$ Then $x_n=\bar{x} + {t'}_n0+{t'}_n^2 0 + \ldots + {t'}_n^{k-2} 0 + {t'}_n^{k-1} h + {t'}_n^{k-1}\tau_n w_n \in Q\,\,\forall\,\,n,$ where ${t'}_n = \sqrt[k-1]{t_n} \to 0.$ Consequently, $w \in T^kQ(\bar{x}, \underbrace{0, \ldots,  0}\limits_{k-2}, h)$. \end{proof}

\smallskip

Perhaps the equality \eqref{e2.2} explains the fact that in \cite{Penot17} the cone $T^2Q(\bar{x},h)$ is called a high order tangent cone but not the second order.
\smallskip

\begin{definition}\label{d4}
 Let $\alpha \in \overline{\mathbb R}_+:= {{\mathbb R}_{++}}\cup
\{0\}\cup\{+\infty\}$ and $k \ge 2.$
The subset $T_{\alpha}^kQ(\bar{x},h_1, \ldots,
h_{k-1})$ of $T^kQ(\bar{x},h_1, \ldots,  h_{k-1})$ consisting of such vectors $w \in X$ for which there exist sequences
$(t_n),\,(\tau_n) \in S(0)$ and $(w_n) \in U_X(w)$ such that $t_n^{-1}\tau_n \to \alpha$ as $n \to \infty$ and
$\bar{x}+t_nh_1+t^2_nh_2 + \ldots+
t_n^{k-1}h_{k-1}+t_n^{k-1}\tau_nw_n\in Q$ for all $n=1,2,\dots$ is called {\it the $\alpha$-slice of the extended tangent cone of $k$-th order  to a set $Q \subset X$ at a point $\bar{x} \in {\rm cl}Q$ with respect to an ordered collection of directions $(h_1, h_2, \ldots, h_{k-1}) \in X^{k-1}$}.

\end{definition}

Replacing, if necessary, sequences by subsequences, it is easy to verify that
$$ 
T^kQ(\bar{x},h_1, \ldots,  h_{k-1}) = \bigcup_{\alpha \in
\overline{{\mathbb R}}_{+}}T_{\alpha}^kQ(\bar{x},h_1, \ldots, h_{k-1}).
$$ 

In the next propositions we present some properties of slices of the extended tangent cone.

\smallskip

\begin{proposition}\label{pr5a} \textit{Let} $\alpha > 0$. \textit{Then}
\begin{equation}\label{e2.3}
T^k_\alpha Q(\bar{x},h_1, \ldots,  h_{k-1}) = \alpha^{-1} T^k_{pr}Q(\bar{x},h_1, \ldots,  h_{k-1})\,\,\forall\,\,\alpha \in {\mathbb{R}}_{++}
\end{equation}
{\it and, hence},
$$T^k_1Q(\bar{x},h_1, \ldots,  h_{k-1}) = T^k_{pr}Q(\bar{x},h_1, \ldots,  h_{k-1})$$
{\it and}
$$\alpha_1T^k_{\alpha_1} Q(\bar{x},h_1, \ldots,  h_{k-1}) = \alpha_2T^k_{\alpha_2} Q(\bar{x},h_1, \ldots,  h_{k-1})\,\,\forall\,\,\alpha_1,\alpha_2 \in {\mathbb{R}}_{++}.$$
\end{proposition}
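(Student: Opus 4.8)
The plan is to prove the central identity \eqref{e2.3} by a single rescaling trick and then read off the two remaining assertions as immediate consequences. The whole argument rests on the algebraic identity $t_n^{k-1}\tau_n = t_n^{k}\cdot(\tau_n/t_n)$, which converts the defining relation of the $\alpha$-slice (with perturbation term $t_n^{k-1}\tau_n w_n$) into the defining relation of the proper tangent set (with perturbation term $t_n^{k}$ times a vector), the scalar factor $\tau_n/t_n$ being absorbed into the perturbing vector.

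For the inclusion $T^k_\alpha Q(\bar x,h_1,\ldots,h_{k-1}) \subseteq \alpha^{-1}T^k_{pr}Q(\bar x,h_1,\ldots,h_{k-1})$, I would take $w$ in the slice together with the sequences $(t_n),(\tau_n)\in S(0)$ and $(w_n)\in U_X(w)$ furnished by Definition \ref{d4}, so that $t_n^{-1}\tau_n\to\alpha$. Setting $u_n := (\tau_n/t_n)w_n$, the membership relation rewrites as $\bar x + t_nh_1+\cdots+t_n^{k-1}h_{k-1}+t_n^{k}u_n\in Q$. Since $\tau_n/t_n\to\alpha$ (a finite number) and $w_n\to w$, the product converges, $u_n\to\alpha w$; hence $(u_n)\in U_X(\alpha w)$ and $(t_n)\in S(0)$ witness, via Definition \ref{d2.1}, that $\alpha w\in T^k_{pr}Q(\bar x,h_1,\ldots,h_{k-1})$, i.e. $w\in\alpha^{-1}T^k_{pr}Q(\bar x,h_1,\ldots,h_{k-1})$.

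For the reverse inclusion I would run the substitution backwards: given $v\in T^k_{pr}Q(\bar x,h_1,\ldots,h_{k-1})$ with sequences $(t_n)\in S(0)$ and $(v_n)\in U_X(v)$, I put $\tau_n:=\alpha t_n$ and $w_n:=\alpha^{-1}v_n$. Then $(\tau_n)\in S(0)$, $t_n^{-1}\tau_n=\alpha\to\alpha$, $w_n\to\alpha^{-1}v$, and $t_n^{k-1}\tau_n w_n=t_n^{k}v_n$, so the proper-tangency relation becomes exactly the slice relation of Definition \ref{d4} for the vector $\alpha^{-1}v$. Thus $\alpha^{-1}v\in T^k_\alpha Q(\bar x,h_1,\ldots,h_{k-1})$, giving $\alpha^{-1}T^k_{pr}Q\subseteq T^k_\alpha Q$ and completing \eqref{e2.3}.

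Finally, the two displayed consequences are purely formal once \eqref{e2.3} is in hand: specializing to $\alpha=1$ yields $T^k_1Q=T^k_{pr}Q$; and multiplying \eqref{e2.3} through by $\alpha$ gives $\alpha\,T^k_\alpha Q=T^k_{pr}Q$ for every $\alpha>0$, so $\alpha_1T^k_{\alpha_1}Q$ and $\alpha_2T^k_{\alpha_2}Q$ both equal $T^k_{pr}Q$ and hence coincide. The argument is almost entirely bookkeeping; the only point demanding any care — which I would flag as the main obstacle — is the legitimacy of absorbing $\tau_n/t_n$ into the perturbing vector: in the forward direction one must check that the rescaled vectors $u_n$ still converge (which uses finiteness of $\alpha$), and in the reverse direction that the rescaled time sequence $(\tau_n)$ stays in $S(0)$ and that $\alpha^{-1}$ is well defined (which uses $0<\alpha<\infty$).
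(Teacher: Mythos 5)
Your proof is correct and follows essentially the same route as the paper's: in the forward direction you absorb the factor $t_n^{-1}\tau_n$ into the perturbing vector (the paper's $w'_n = t_n^{-1}\tau_n w_n \to \alpha w$), and in the reverse direction you set $\tau_n := \alpha t_n$ and rescale the vectors by $\alpha^{-1}$, exactly as the paper does. The two displayed consequences are likewise obtained by the same formal specialization, so there is nothing to flag.
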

\begin{proof} Let $w \in T^k_\alpha Q(\bar{x},h_1, \ldots,  h_{k-1})$ and let sequences $(t_n),\,(\tau_n) \in S(0)$ and $(w_n) \in U_X(w)$ be such that $t_n^{-1}\tau_n \to \alpha$ as $n \to \infty$ and $x_n:=\bar{x}+t_nh_1+t^2_nh_2 + \ldots+t_n^{k-1}h_{k-1}+t_n^{k-1}\tau_nw_n\in Q$ for all $n=1,2,\dots$. Then $x_n=\bar{x}+t_nh_1+t^2_nh_2 + \ldots+t_n^{k-1}h_{k-1}+t_n^{k}w'_n\in Q$ for all $n=1,2,\dots$,
where $w'_n = t_n^{-1}\tau_n w_n \to \alpha w$ as $n \to \infty$ and, hence, $\alpha w \in T^k_{pr} Q(\bar{x},h_1, \ldots,  h_{k-1}),$ that is $w \in \alpha^{-1} T^k_{pr} Q(\bar{x},h_1, \ldots,  h_{k-1}).$

Conversely, let $\alpha \in {\mathbb{R}}_{++}$ and let $w \in T^k_{pr}Q(\bar{x},h_1, \ldots,  h_{k-1}).$ Then there exist sequences $(t_n) \in S(0)$ and $(w_n) \in U_X(w)$ such that $x_n:=\bar{x}+t_nh_1+t^2_nh_2 + \ldots+t_n^{k-1}h_{k-1}+t_n^{k}w_n\in Q$ for all $n=1,2,\dots$. Setting $\tau_n = \alpha t_n$, we obtain $t^{-1}_n\tau_n = \alpha$ and $x_n=\bar{x}+t_nh_1+t^2_nh_2 + \ldots+t_n^{k-1}h_{k-1}+t_n^{k-1}\tau_n \bar{w}_n \in Q$ for all $n=1,2,\dots$, where $\bar{w}_n = \alpha^{-1} w_n \to \alpha^{-1}w$. From this we conclude that  $\alpha^{-1}w \in T^k_\alpha Q(\bar{x},h_1, \ldots,  h_{k-1}).$ \end{proof}

\smallskip

\begin{remark}\label{r3} Since $T^k_{pr}Q(\bar{x},h_1, \ldots,  h_{k-1})$ is close for all $k \ge 2$, it follows from the equality \eqref{e2.3} that for any real $\alpha > 0$ the $\alpha$-slice $T^k_\alpha Q(\bar{x},h_1, \ldots,  h_{k-1})$ also is
a close set for all $k \ge 2$.
\end{remark}

\begin{corollary}\label{cor1} {\it For any point $\bar{x} \in {\rm cl}Q$ and any ordered collection of directions $(h_1,\ldots,h_{k-1}) \in X^{k-1}$
the following equality holds:
$$
T^kQ(\bar{x}, h_1,\ldots, h_{k-1}) = ({\rm cone}\,T^k_{pr}Q(\bar{x}, h_1,\ldots, h_{k-1})) \cup  T^k_0Q(\bar{x}, h_1,\ldots, h_{k-1}) \cup T^k_\infty Q(\bar{x}, h_1,\ldots, h_{k-1}).
$$
Here ${\rm cone}\,M := \{\lambda x \mid \lambda > 0, x \in M\}$ is the conical hull of a set $M$ (the smallest cone containing a set $M$)}.
\end{corollary}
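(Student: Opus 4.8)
The plan is to start from the decomposition of the extended tangent cone into the union of its $\alpha$-slices, namely $T^kQ(\bar{x},h_1,\ldots,h_{k-1}) = \bigcup_{\alpha \in \overline{\mathbb R}_+} T^k_\alpha Q(\bar{x},h_1,\ldots,h_{k-1})$, which was established (by passing to subsequences) immediately after Definition \ref{d4}. Since $\overline{\mathbb R}_+ = {\mathbb R}_{++}\cup\{0\}\cup\{+\infty\}$, I would split this union into three pieces: the union over the strictly positive finite values $\alpha \in {\mathbb R}_{++}$, plus the single slice $T^k_0 Q(\bar{x},h_1,\ldots,h_{k-1})$, plus the single slice $T^k_\infty Q(\bar{x},h_1,\ldots,h_{k-1})$. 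The latter two already appear verbatim on the right-hand side of the asserted equality, so the whole content of the corollary reduces to identifying $\bigcup_{\alpha \in {\mathbb R}_{++}} T^k_\alpha Q(\bar{x},h_1,\ldots,h_{k-1})$ with ${\rm cone}\,T^k_{pr}Q(\bar{x},h_1,\ldots,h_{k-1})$.

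For that identification I would invoke Proposition \ref{pr5a}: for every $\alpha > 0$, equation \eqref{e2.3} gives $T^k_\alpha Q(\bar{x},h_1,\ldots,h_{k-1}) = \alpha^{-1} T^k_{pr}Q(\bar{x},h_1,\ldots,h_{k-1})$, whence $\bigcup_{\alpha \in {\mathbb R}_{++}} T^k_\alpha Q = \bigcup_{\alpha>0} \alpha^{-1} T^k_{pr}Q$. Performing the change of parameter $\lambda = \alpha^{-1}$, which is a bijection of ${\mathbb R}_{++}$ onto itself, this becomes $\bigcup_{\lambda>0} \lambda\,T^k_{pr}Q = \{\lambda w \mid \lambda > 0,\ w \in T^k_{pr}Q\}$, and by the definition of the conical hull recalled in the statement this is exactly ${\rm cone}\,T^k_{pr}Q(\bar{x},h_1,\ldots,h_{k-1})$. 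Substituting back into the three-way split then yields the claimed equality.

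I do not anticipate any genuine obstacle here: the argument is a purely formal assembly of the slice-decomposition together with Proposition \ref{pr5a}. The only points deserving a moment's attention are that $\alpha \mapsto \alpha^{-1}$ is a bijection of ${\mathbb R}_{++}$ (so the union indexed by $\alpha$ and the one indexed by $\lambda$ genuinely coincide), and that $\bigcup_{\lambda>0}\lambda M$ agrees with the definition ${\rm cone}\,M = \{\lambda x \mid \lambda>0,\ x \in M\}$ given in the statement; both are immediate. No case distinction between finite- and infinite-dimensional $X$ is needed, since every ingredient used holds in an arbitrary normed space.
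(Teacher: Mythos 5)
Your proposal is correct and is precisely the argument the paper intends: the corollary is stated without proof immediately after the slice decomposition $T^kQ(\bar{x},h_1,\ldots,h_{k-1}) = \bigcup_{\alpha \in \overline{\mathbb R}_+} T^k_\alpha Q(\bar{x},h_1,\ldots,h_{k-1})$ and Proposition \ref{pr5a}, and your assembly of these two facts (splitting off the $\alpha=0$ and $\alpha=+\infty$ slices, then rewriting $\bigcup_{\alpha>0}\alpha^{-1}T^k_{pr}Q$ as the conical hull via the substitution $\lambda=\alpha^{-1}$) is exactly the derivation the author leaves to the reader. No gaps; the observation that no finite-dimensionality assumption is needed is also accurate.
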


\begin{proposition}\label{pr5} {\it The slices $T^k_0Q(\bar{x}, h_1,\ldots, h_{k-1})$ and $T^k_\infty Q(\bar{x}, h_1,\ldots, h_{k-1})$ are closed cones}.

\end{proposition}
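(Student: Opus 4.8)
The plan is to treat the two slices in parallel, since the arguments for $T^k_0Q(\bar{x},h_1,\ldots,h_{k-1})$ and $T^k_\infty Q(\bar{x},h_1,\ldots,h_{k-1})$ differ only in the direction of the limiting condition imposed on the ratio $t_n^{-1}\tau_n$. Two things must be checked: that each slice is a cone, and that it is closed.

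First I would establish the cone property by rescaling the witnessing sequences. Fix $w\in T^k_0Q(\bar{x},h_1,\ldots,h_{k-1})$ together with sequences $(t_n),(\tau_n)\in S(0)$ and $(w_n)\in U_X(w)$ satisfying $t_n^{-1}\tau_n\to 0$ and $\bar{x}+\sum_{i=1}^{k-1}t_n^i h_i + t_n^{k-1}\tau_n w_n\in Q$. Given $\lambda>0$, I keep $(t_n)$ unchanged and replace $(\tau_n)$ by $(\tau_n/\lambda)$ and $(w_n)$ by $(\lambda w_n)$. The product $t_n^{k-1}\tau_n w_n$ is then unchanged, so the membership in $Q$ persists; meanwhile $(\tau_n/\lambda)\in S(0)$, $\lambda w_n\to\lambda w$, and $t_n^{-1}(\tau_n/\lambda)=\lambda^{-1}(t_n^{-1}\tau_n)\to 0$. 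Hence $\lambda w\in T^k_0Q(\bar{x},h_1,\ldots,h_{k-1})$. The identical computation handles the $\infty$-slice, because dividing a sequence tending to $+\infty$ by the fixed positive number $\lambda$ again yields a sequence tending to $+\infty$.

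The substantive step is closedness, which I would obtain by a diagonal extraction. Suppose $w^{(m)}\to w$ with each $w^{(m)}\in T^k_0Q(\bar{x},h_1,\ldots,h_{k-1})$, and for every $m$ choose witnessing sequences $(t_n^{(m)}),(\tau_n^{(m)})\in S(0)$ and $(w_n^{(m)})\in U_X(w^{(m)})$ with $(t_n^{(m)})^{-1}\tau_n^{(m)}\to 0$ as $n\to\infty$. Since, for each fixed $m$, we simultaneously have $t_n^{(m)}\to 0$, $\tau_n^{(m)}\to 0$, $(t_n^{(m)})^{-1}\tau_n^{(m)}\to 0$ and $w_n^{(m)}\to w^{(m)}$ as $n\to\infty$, I can choose an index $n(m)$ so large that $t_{n(m)}^{(m)}<1/m$, $\tau_{n(m)}^{(m)}<1/m$, $(t_{n(m)}^{(m)})^{-1}\tau_{n(m)}^{(m)}<1/m$ and $\|w_{n(m)}^{(m)}-w^{(m)}\|<1/m$. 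Setting $s_m:=t_{n(m)}^{(m)}$, $\sigma_m:=\tau_{n(m)}^{(m)}$, $v_m:=w_{n(m)}^{(m)}$ yields $(s_m),(\sigma_m)\in S(0)$ with $s_m^{-1}\sigma_m\to 0$ and $v_m\to w$ (via $\|v_m-w\|\le 1/m+\|w^{(m)}-w\|$), while $\bar{x}+\sum_{i=1}^{k-1}s_m^i h_i + s_m^{k-1}\sigma_m v_m\in Q$ for every $m$. Thus $w\in T^k_0Q(\bar{x},h_1,\ldots,h_{k-1})$. For the $\infty$-slice I run the same extraction but require instead $(t_{n(m)}^{(m)})^{-1}\tau_{n(m)}^{(m)}>m$, which is possible because that ratio now tends to $+\infty$ with $n$; this forces $s_m^{-1}\sigma_m\to+\infty$, and the other conditions are unaffected.

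The only real obstacle is the bookkeeping in the diagonal step: one must confirm that all four requirements on the chosen index $n(m)$ can be satisfied at once. This is immediate, since with $m$ held fixed each of the four quantities converges to its target as $n\to\infty$, so a single sufficiently large $n(m)$ works for all of them simultaneously. I would also note that no compactness or finite-dimensionality is used, so the conclusion holds in an arbitrary normed space $X$.
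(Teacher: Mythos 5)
Your proof is correct, and it holds in an arbitrary normed space, matching the generality of the statement. The route differs from the paper's mainly in how closedness is established. The paper identifies each slice with a sequential Painlev\'{e}--Kuratowski upper/outer limit of the set-valued mapping $(t,\tau)\rightrightarrows \bigl(Q-\bar{x}-th_1-\dots-t^{k-1}h_{k-1}\bigr)/(t^{k-1}\tau)$, where the convergence $(t,\tau)\to(0_+,0_+)$ is constrained by $t^{-1}\tau\to 0$ (resp.\ $t^{-1}\tau\to+\infty$), and then appeals to the general fact that such outer limits are always closed; the cone property is dismissed as ``verified directly from the definitions.'' You instead do both steps by hand: the rescaling $\tau_n\mapsto\tau_n/\lambda$, $w_n\mapsto\lambda w_n$ (which is surely the verification the paper has in mind, and correctly exploits that the constrained ratio $t_n^{-1}\tau_n$ only gets multiplied by the constant $\lambda^{-1}$, preserving both limiting regimes), and a diagonal extraction for closedness. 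The two closedness arguments are morally identical --- the closedness of an outer limit is itself proved by exactly such a diagonalization --- so what your version buys is self-containedness: you work straight from Definition \ref{d4} and need not justify the limsup representation (which the paper states without proof), at the cost of the explicit four-condition bookkeeping in choosing $n(m)$. The paper's version is shorter and places the slices inside the standard variational-analysis formalism, from which other properties of outer limits come for free.
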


\begin{proof} The fact that $T^k_0Q(\bar{x}, h_1,\ldots, h_{m-1})$ and $T^k_\infty Q(\bar{x}, h_1,\ldots, h_{m-1})$ are cones, is verified directly from the definitions of these slices.
The closedness follows from  the equalities
$$ 
T^k_0Q(\bar{x},h_1, \ldots,  h_{k-1}) = \limsup\limits_{(t,\,\tau) \to (+0,\,+0),\, t^{-1}\tau \to 0}\frac{Q-\bar{x}-th_1-t^2h_2 - \ldots -
t^{k-1}h_{k-1}}{t^{k-1}\tau} 
$$
and
$$ 
T^k_\infty Q(\bar{x},h_1, \ldots,  h_{k-1}) = \limsup\limits_{(t,\,\tau) \to (+0,\,+0),\, t^{-1}\tau \to +\infty}\frac{Q-\bar{x}-th_1-t^2h_2 - \ldots -
t^{k-1}h_{k-1}}{t^{k-1}\tau}. 
$$

\end{proof}

\smallskip

The cone $T^k_\infty Q(\bar{x},h_1, \ldots,  h_{k-1})$ will also called {\it the asymptotic tangent cone of $k$-th order to the set $Q$ at the point $\bar{x}$ with respect to the ordered collection of directions $(h_1,h_2,\ldots,h_{k-1}) \in X^{k-1}$}.

\smallskip

\begin{proposition}\label{pr6} { \it For any $\beta \in {\mathbb{R}}_{++}$ and any $\alpha \in \overline{\mathbb R}_+$ the following equality holds}: $$T_\alpha^kQ(\bar{x},\beta h_1,\beta ^2 h_2, \ldots, \beta^{k-1}
h_{k-1}) = T_\alpha^kQ(\bar{x},h_1, \ldots, h_{k-1}).$$

\end{proposition}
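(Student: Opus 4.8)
The plan is to reduce the identity to a single reparametrization of the leading sequence. Suppose $w\in T^k_\alpha Q(\bar{x},\beta h_1,\dots,\beta^{k-1}h_{k-1})$ and fix witnessing sequences $(t_n),(\tau_n)\in S(0)$ and $(w_n)\in U_X(w)$ with $t_n^{-1}\tau_n\to\alpha$ and
$$
\bar{x}+t_n\beta h_1+t_n^2\beta^2h_2+\dots+t_n^{k-1}\beta^{k-1}h_{k-1}+t_n^{k-1}\tau_n w_n\in Q.
$$
The decisive remark is that $t_n^j\beta^j h_j=(\beta t_n)^j h_j$, so with $s_n:=\beta t_n$ the polynomial part is precisely the one attached to the \emph{unscaled} collection evaluated at $s_n$. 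Writing the remainder as $t_n^{k-1}\tau_n w_n=s_n^{k-1}\sigma_n w_n$ with $\sigma_n:=\beta^{-(k-1)}\tau_n$ recasts the membership relation as
$$
\bar{x}+s_n h_1+s_n^2h_2+\dots+s_n^{k-1}h_{k-1}+s_n^{k-1}\sigma_n w_n\in Q,
$$
with $(s_n),(\sigma_n)\in S(0)$ and $(w_n)\in U_X(w)$; running the same substitution backwards ($t_n:=\beta^{-1}s_n$, $\tau_n:=\beta^{k-1}\sigma_n$) converts every witness for the unscaled collection into one for the scaled collection. Hence the two extended cones contain exactly the same vectors, and only the value of the slicing parameter is at stake.

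The main obstacle is the bookkeeping of that slicing parameter. From $s_n^{-1}\sigma_n=(\beta t_n)^{-1}\beta^{-(k-1)}\tau_n=\beta^{-k}\,t_n^{-1}\tau_n$ one sees that the defining ratio is merely multiplied by the positive constant $\beta^{-k}$. For the two extreme slices this settles the matter at once, because multiplication by $\beta^{-k}>0$ fixes both $0$ and $+\infty$: if $t_n^{-1}\tau_n\to 0$ (respectively $\to+\infty$) then $s_n^{-1}\sigma_n\to 0$ (respectively $\to+\infty$), and the same holds in reverse. This already yields
$$
T^k_0Q(\bar{x},\beta h_1,\dots,\beta^{k-1}h_{k-1})=T^k_0Q(\bar{x},h_1,\dots,h_{k-1})
$$
together with the analogous equality for the asymptotic slice $T^k_\infty$.

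For a finite $\alpha\in\mathbb R_{++}$ I would pass through Proposition \ref{pr5a} rather than argue from the ratio directly. Applying the same reparametrization to the proper tangent set (its correction term $t_n^k w_n$ becomes $s_n^k\beta^{-k}w_n$) produces the homogeneity relation $T^k_{pr}Q(\bar{x},\beta h_1,\dots,\beta^{k-1}h_{k-1})=\beta^k\,T^k_{pr}Q(\bar{x},h_1,\dots,h_{k-1})$; combined with the identity $T^k_\alpha Q=\alpha^{-1}T^k_{pr}Q$ this gives $T^k_\alpha Q(\bar{x},\beta h_1,\dots,\beta^{k-1}h_{k-1})=\beta^k\,T^k_\alpha Q(\bar{x},h_1,\dots,h_{k-1})=T^k_{\beta^{-k}\alpha}Q(\bar{x},h_1,\dots,h_{k-1})$. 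Reconciling this factor $\beta^k$, equivalently the reindexing $\alpha\mapsto\beta^{-k}\alpha$, with the stated equality is precisely the step I expect to require the most care, and it is exactly here that I would double-check whether a finite slice is truly preserved or only preserved up to this rescaling of its index. As a consistency check at the level of the whole cone, the decomposition of Corollary \ref{cor1} shows the difficulty evaporates: the conical hull ${\rm cone}\,T^k_{pr}Q$ is unchanged by the positive factor $\beta^k$, and the two extreme slices were matched above, so the full extended tangent cone $T^kQ(\bar{x},h_1,\dots,h_{k-1})$ is invariant under the substitution $h_j\mapsto\beta^j h_j$.
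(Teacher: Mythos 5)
Every step of your computation is correct, and the point at which you hesitated is not a gap in your argument --- it is an error in the proposition itself and in the paper's own proof of it. The paper performs exactly your substitution, writing $t'_n=\beta^{-1}t_n$ and $\tau'_n=\beta^{k-1}\tau_n$ so that the two base points coincide, but it then asserts that $t_n^{-1}\tau_n=(t'_n)^{-1}\tau'_n$. That is an algebra slip: in fact $(t'_n)^{-1}\tau'_n=\beta^{k}\,t_n^{-1}\tau_n$, which is precisely the factor you isolated. What the substitution really proves is the transformation law you derived,
$$
T^k_\alpha Q(\bar{x},\beta h_1,\ldots,\beta^{k-1}h_{k-1})=T^k_{\beta^{-k}\alpha}Q(\bar{x},h_1,\ldots,h_{k-1})=\beta^{k}\,T^k_\alpha Q(\bar{x},h_1,\ldots,h_{k-1})\quad(\alpha\in\mathbb{R}_{++}),
$$
together with the stated equality in the two cases $\alpha=0$ and $\alpha=+\infty$, since these are the fixed points of $\alpha\mapsto\beta^{-k}\alpha$. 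Your treatment of those two slices, and your consistency check that the full cone $T^kQ(\bar{x},h_1,\ldots,h_{k-1})$ is invariant (via Corollary \ref{cor1}, or because $\alpha\mapsto\beta^{-k}\alpha$ is a bijection of $\overline{\mathbb R}_+$), is exactly what a corrected version of the paper's argument yields.

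For finite $\alpha\in\mathbb{R}_{++}$ the stated equality is genuinely false, so no additional care could have closed your gap. Take $Q=\{(s,s^2)\mid s\ge 0\}\subset\mathbb{R}^2$, $\bar{x}=(0,0)$, $k=2$, $h=(1,0)$, $\beta=2$, $\alpha=1$, and write $w_n=(w_{n,1},w_{n,2})$. Membership of $\bar{x}+t_nh+t_n\tau_n w_n$ in $Q$ forces $w_{n,2}=\frac{t_n}{\tau_n}\bigl(1+\tau_n w_{n,1}\bigr)^2\to 1$, whence $T^2_1Q(\bar{x},h)=\{(w_1,1)\mid w_1\in\mathbb{R}\}$; the same computation for $2h$ gives $T^2_1Q(\bar{x},2h)=\{(w_1,4)\mid w_1\in\mathbb{R}\}$. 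These slices are parallel lines at different heights, differing by your factor $\beta^k=4$; equivalently $T^2_1Q(\bar{x},2h)=T^2_{1/4}Q(\bar{x},h)$, as the reindexed law predicts. So Proposition \ref{pr6} should either be restricted to $\alpha\in\{0,+\infty\}$ or restated with $T^k_{\beta^{-k}\alpha}$ on the right-hand side; as written it holds only in degenerate situations, e.g.\ when $T^k_{pr}Q(\bar{x},h_1,\ldots,h_{k-1})$ is itself a cone or empty. The error does not propagate further: Theorem 1 and its corollaries use only the slices $T^k_{pr}=T^k_1$ and $T^k_\infty$ for a fixed collection of directions, so they are unaffected.
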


\begin{proof} For any real number $\beta \in {\mathbb{R}}_{++}$ and any sequences $(t_n),(\tau_n) \in S(0), t^{-1}_n\tau_n \to \alpha$ and $(w_n) \in U(w)$ we have
$$\bar{x} + t_nh_1 + \ldots +t_n^{k-1}h_{k-1} + t_n^{k-1}\tau_n w_n = \bar{x} + {t'}_n(\beta h_1) + \ldots +{t'}_n^{k-1}(\beta^{k-1}h_{k-1}) + {t'}_n^{k-1}{\tau'}_nw_n,$$ where ${t'}_n = \beta^{-1}t_n,\,{\tau'}_n = \beta^{k-1} \tau_n.$
Since $({t'}_n),\,({\tau'}_n) \in S(0)$ and $t^{-1}_n\tau_n={t'}^{-1}_n{\tau'}_n$ , from the above equality we see, that any vector $w \in T_\alpha^kQ(\bar{x},\beta h_1,\beta^2 h_2, \ldots, \beta^{k-1}
h_{k-1})$ also belongs to \linebreak $T_\alpha^kQ(\bar{x},h_1, \ldots, h_{k-1})$ and conversely.
\end{proof}

\smallskip

Now we give another characterization of the cone $T^k_\infty Q(\bar{x},h_1, \ldots,  h_{k-1})$. For this we need the following notions.

A ray emanating from the origin and going through a nonzero vector $w \in X$, denoted by ${\rm dir}\,w$, is called {\it a direction point}. The set of all direction points is denoted by ${\rm hzn}\,X$, and is called \cite{RW} {\it the horizon of the space $X$}.

A sequence of vectors $(w_n) \subset X$ is said \cite{RW} to converge to a direction point ${\rm dir}\,w \in {\rm hzn}\,X$ if there exists a sequence
$(\gamma_n) \in S(0)$ such that $\gamma_n w_n \to w$ as $n \to \infty$.

By $U_X({\rm dir}\,w)$ we denote the collection of all sequences $(w_n)$ from $X,$ which converge to ${\rm dir}\,w.$

\smallskip

\begin{proposition}\label{pr7}  {\it For any $w \in T^k_\infty Q(\bar{x}, h_1,\ldots, h_{m-1})$ there exist sequences $(t_n) \in S(0)$ and $(w_n) \in U({\rm dir}\,w)$ such that $\bar{x}+t_nh_1+t^2_nh_2 + \ldots+t_n^{k-1}h_{k-1}+t_n^{k}w_n\in Q$ for all $n=1,2,\dots$.

Conversely, if there exist sequences $(t_n), (\gamma_n) \in S(0)$ and $(w_n) \in U({\rm dir}\,w)$ such that $\gamma_nw_n \to w,\,t_n\gamma_n^{-1} \to 0$ and $\bar{x}+t_nh_1+t^2_nh_2 + \ldots+t_n^{k-1}h_{k-1}+t_n^{k}w_n\in Q$ for all $n=1,2,\dots$, then $w \in T^k_\infty Q(\bar{x}, h_1,\ldots, h_{m-1})$.}
\end{proposition}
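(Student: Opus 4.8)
The plan is to prove both implications by a single algebraic regrouping of the tail term together with a reparametrization of the null sequences, trading the factor $t_n^{k-1}\tau_n$ that appears in the definition of the $\infty$-slice (Definition \ref{d4} with $\alpha=+\infty$) against the factor $t_n^{k}$ appearing in the statement, and translating ordinary norm-convergence of the coefficient vectors into direction-point convergence by means of an auxiliary null sequence.

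For the direct part I would start from a vector $w \in T^k_\infty Q(\bar{x},h_1,\ldots,h_{k-1})$ and invoke Definition \ref{d4}: there are sequences $(t_n),(\tau_n)\in S(0)$ and $(v_n)\in U_X(w)$ with $t_n^{-1}\tau_n\to+\infty$ and $\bar{x}+t_nh_1+\ldots+t_n^{k-1}h_{k-1}+t_n^{k-1}\tau_n v_n\in Q$. The key observation is the identity $t_n^{k-1}\tau_n v_n=t_n^{k}w_n$ with $w_n:=t_n^{-1}\tau_n v_n$, so that the membership is already of the required polynomial shape. It then remains to check that $(w_n)\in U_X({\rm dir}\,w)$: setting $\gamma_n:=t_n\tau_n^{-1}$, the hypothesis $t_n^{-1}\tau_n\to+\infty$ gives $(\gamma_n)\in S(0)$, while $\gamma_n w_n=v_n\to w$, which is exactly the definition of convergence to the direction point ${\rm dir}\,w$.

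For the converse I would run the same substitution backwards. Given $(t_n),(\gamma_n)\in S(0)$, $(w_n)\in U_X({\rm dir}\,w)$ with $\gamma_n w_n\to w$ and $t_n\gamma_n^{-1}\to 0$, satisfying $\bar{x}+t_nh_1+\ldots+t_n^{k-1}h_{k-1}+t_n^{k}w_n\in Q$, I would put $v_n:=\gamma_n w_n$ and $\sigma_n:=t_n\gamma_n^{-1}$, so that $t_n^{k}w_n=t_n^{k-1}\sigma_n v_n$. The three conditions required by Definition \ref{d4} for the $\infty$-slice then follow immediately: $v_n\to w$, so $(v_n)\in U_X(w)$; $(\sigma_n)=(t_n\gamma_n^{-1})\in S(0)$ by the hypothesis $t_n\gamma_n^{-1}\to 0$; and $t_n^{-1}\sigma_n=\gamma_n^{-1}\to+\infty$, which places $w$ in the asymptotic slice, giving $w \in T^k_\infty Q(\bar{x},h_1,\ldots,h_{k-1})$.

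I do not expect a serious obstacle here: the whole argument is bookkeeping about positive null sequences and one regrouping of the tail. The single point to state carefully is the role of the growth and decay hypotheses. In the direct part, $t_n^{-1}\tau_n\to+\infty$ is precisely what makes the rescaling factor $\gamma_n=t_n\tau_n^{-1}$ a null sequence, i.e.\ what converts norm-convergence $v_n\to w$ into genuine convergence of the (possibly unbounded) coefficients $w_n$ to the direction point ${\rm dir}\,w$. Symmetrically, in the converse the extra hypothesis $t_n\gamma_n^{-1}\to 0$ is exactly the condition guaranteeing both that $\sigma_n\to 0$ and that the ratio $\sigma_n/t_n=\gamma_n^{-1}$ blows up, so that the reconstructed sequences legitimately witness membership in the $\infty$-slice rather than in some finite slice $T^k_\alpha Q$.
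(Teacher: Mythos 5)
Your proof is correct and follows essentially the same route as the paper: the same substitution $w_n := t_n^{-1}\tau_n v_n$ (with $\gamma_n := t_n\tau_n^{-1}$ witnessing convergence to ${\rm dir}\,w$) in the direct part, and the inverse substitution $\tau_n := t_n\gamma_n^{-1}$, $v_n := \gamma_n w_n$ in the converse. Your explicit remarks on how the growth/decay hypotheses drive each rescaling merely spell out what the paper's proof leaves implicit.
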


\begin{proof} Let $w \in T^k_\infty Q(\bar{x}, h_1,\ldots, h_{m-1})$ and let sequences$(t_n),\,(\tau_n) \in S(0)$ and $(w'_n) \in U(w)$ be such that $t_n^{-1}\tau_n \to +\infty$ and $x_n:=\bar{x}+t_nh_1+t^2_nh_2 + \ldots+t_n^{k-1}h_{k-1}+t_n^{k-1}\tau_n w'_n\in Q$ for all $n=1,2,\dots.$ Then  $x_n=\bar{x}+t_nh_1+t^2_nh_2 + \ldots+t_n^{k-1}h_{k-1}+t_n^{k}w_n\in Q\,\,\forall\,\,n$, where $w_n = t_n^{-1}\tau_n w'_n \to {\rm dir}\,w.$

Conversely, if there exist sequences $(t_n), (\gamma_n) \in S(0)$ and $(w_n) \in U({\rm dir}\,w)$ such that $\gamma_nw_n \to w,\,t_n\gamma_n^{-1} \to 0$ and $x_n:=\bar{x}+t_nh_1+t^2_nh_2 + \ldots+t_n^{k-1}h_{k-1}+t_n^{k}w_n\in Q$ for all $n=1,2,\dots$, then $x_n=\bar{x}+t_nh_1+t^2_nh_2 + \ldots+t_n^{k-1}h_{k-1}+t_n^{k-1}\tau_n w'_n\in Q\,\,\forall\,\,n,$ where $\tau_n := t_n\gamma_n^{-1} \to 0$, $t^{-1}_n\tau_n = \gamma_n^{-1} \to +\infty$, and $w'_n := \gamma_n w_n \to w.$
\end{proof}

\smallskip

\begin{proposition}\label{pr8} $$T^kQ(\bar{x}, h_1,\ldots, h_{m-1}, \underbrace{0, \ldots,
0}\limits_{k-m}) \subset T^m_0Q(\bar{x}, h_1,\ldots, h_{m-1})\,\forall\,\,0<m<k,$$ {\it in particular}, $$T^kQ(\bar{x}, h, \underbrace{0, \ldots,
0}\limits_{k-2}) \subset T^2_0Q(\bar{x},h)\quad \forall\,\, k > 2.$$

\end{proposition}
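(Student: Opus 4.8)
The plan is to take an arbitrary $w$ in the left-hand cone, write out Definition~\ref{d2.2} for the collection $(h_1,\ldots,h_{m-1},\underbrace{0,\ldots,0}_{k-m})$, and then reparametrize the membership relation so that it matches the defining condition of the $0$-slice $T^m_0Q(\bar{x},h_1,\ldots,h_{m-1})$ from Definition~\ref{d4}. The whole argument is a single change of parameters; no approximation or compactness is needed, so this holds in an arbitrary normed space.

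First I would fix $w \in T^kQ(\bar{x},h_1,\ldots,h_{m-1},\underbrace{0,\ldots,0}_{k-m})$ and produce, from Definition~\ref{d2.2}, sequences $(t_n),(\tau_n)\in S(0)$ and $(w_n)\in U_X(w)$ such that
\[
\bar{x}+t_nh_1+\ldots+t_n^{m-1}h_{m-1}+t_n^{k-1}\tau_nw_n\in Q\quad\forall\,n,
\]
where the $k-m$ monomials $t_n^m\cdot 0,\ldots,t_n^{k-1}\cdot 0$ have simply been dropped from the polynomial part.

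The key step is the reparametrization. I would set $s_n:=t_n$ and $\sigma_n:=t_n^{k-m}\tau_n$. Then $s_n^{m-1}\sigma_n=t_n^{m-1}\cdot t_n^{k-m}\tau_n=t_n^{k-1}\tau_n$, so the displayed membership reads exactly
\[
\bar{x}+s_nh_1+\ldots+s_n^{m-1}h_{m-1}+s_n^{m-1}\sigma_nw_n\in Q\quad\forall\,n,
\]
which is precisely the $m$-th order form required by Definition~\ref{d4}. It then remains only to check the sequence conditions: clearly $(s_n)\in S(0)$; since $k-m\ge 1$ and $\tau_n\to 0$ we have $\sigma_n=t_n^{k-m}\tau_n\to 0$ with $\sigma_n>0$, so $(\sigma_n)\in S(0)$; and finally
\[
s_n^{-1}\sigma_n=t_n^{k-m-1}\tau_n\longrightarrow 0,
\]
which identifies the slice parameter as $\alpha=0$ and hence gives $w\in T^m_0Q(\bar{x},h_1,\ldots,h_{m-1})$. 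The particular case is just the instance $m=2$.

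There is no genuinely hard step here, but the one point that deserves care is the verification that $s_n^{-1}\sigma_n\to 0$. When $k-m\ge 2$ this is immediate from $t_n^{k-m-1}\to 0$; in the boundary case $k=m+1$ one has $t_n^{k-m-1}=t_n^{0}=1$, and there the convergence to $0$ is supplied solely by the retained factor $\tau_n\to 0$. This is exactly why the vector lands in the $0$-slice and, in general, not in the proper cone $T^m_{pr}Q(\bar{x},h_1,\ldots,h_{m-1})$.
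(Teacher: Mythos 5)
Your proof is correct and follows essentially the same route as the paper's own argument: drop the zero summands, absorb the excess powers of $t_n$ into a new parameter $\sigma_n = t_n^{k-m}\tau_n$, and observe that $t_n^{-1}\sigma_n = t_n^{k-m-1}\tau_n \to 0$, which places $w$ in the $0$-slice. Your explicit handling of the boundary case $k=m+1$ (where the convergence comes solely from $\tau_n \to 0$) is a small point of care that the paper leaves implicit, but the substance is identical.
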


\begin{proof} For any $w \in T^kQ(\bar{x}, h_1,\ldots, h_{m-1}, \underbrace{0, \ldots,
0}\limits_{k-m})$ we can find sequences $(t_n),\,(\tau_n) \in S(0)$ and $(w_n) \in U_X(w)$ such that $x_n:=\bar{x}+t_nh_1+t^2_nh_2 + \ldots+t_n^{m-1}h_{m-1}+t^{m}_n0 + \ldots+t_n^{k-1}0+t_n^{k-1}\tau_nw_n\in Q$ for all $n=1,2,\dots$ or, after rearranging, $x_n=\bar{x}+t_nh_1+t^2_nh_2 + \ldots+t_n^{m-1}h_{m-1}+t_n^{m-1}\tau'_n w_n\in Q$ for all $n=1,2,\dots$,
 where $\tau'_n = t_n^{k-m}\tau_n.$ Since $t_n^{-1}\tau'_n = t_n^{k-m-1}\tau_n \to 0$ as $n \to \infty$, then $w \in T^m_{0} Q(\bar{x},h_1, \ldots,  h_{m-1}).$ \end{proof}

\smallskip

\begin{proposition}\label{pr9} $$T^k_0 Q(\bar{x},h_1, \ldots,  h_{k-1}) \neq \varnothing
\Rightarrow $$ $$\Rightarrow 0\in T^k_\alpha Q(\bar{x},h_1, \ldots,
h_{k-1}) \ \forall\,\, \alpha\in {\mathbb R}_{++}.$$
\end{proposition}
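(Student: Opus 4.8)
The plan is to reduce the claim to the single assertion $0\in T^k_{pr}Q(\bar{x},h_1,\ldots,h_{k-1})$ and then invoke Proposition~\ref{pr5a}. First I would take any $w\in T^k_0Q(\bar{x},h_1,\ldots,h_{k-1})$, which exists by the hypothesis $T^k_0Q(\bar{x},h_1,\ldots,h_{k-1})\neq\varnothing$, and fix sequences $(t_n),(\tau_n)\in S(0)$ and $(w_n)\in U_X(w)$ witnessing membership in the $0$-slice, so that $t_n^{-1}\tau_n\to 0$ and $\bar{x}+t_nh_1+\ldots+t_n^{k-1}h_{k-1}+t_n^{k-1}\tau_nw_n\in Q$ for all $n$.

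The key observation is that the defining condition $t_n^{-1}\tau_n\to 0$ forces the residual term to be of order strictly smaller than $t_n^k$. Writing $\varepsilon_n:=t_n^{-1}\tau_n$, one has $t_n^{k-1}\tau_n=t_n^k\varepsilon_n$, so that $\bar{x}+t_nh_1+\ldots+t_n^{k-1}h_{k-1}+t_n^k(\varepsilon_n w_n)\in Q$ for all $n$. Since $\varepsilon_n\to 0$ while $(w_n)$ converges (and is therefore bounded), the vector $v_n:=\varepsilon_n w_n$ tends to $0$. Thus the sequences $(t_n)\in S(0)$ and $(v_n)\in U_X(0)$ exhibit $0$ as a $k$-th order proper tangent vector in the sense of Definition~\ref{d2.1}, that is, $0\in T^k_{pr}Q(\bar{x},h_1,\ldots,h_{k-1})$.

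Finally I would apply Proposition~\ref{pr5a}: for every $\alpha\in{\mathbb R}_{++}$ one has the equality $T^k_\alpha Q(\bar{x},h_1,\ldots,h_{k-1})=\alpha^{-1}T^k_{pr}Q(\bar{x},h_1,\ldots,h_{k-1})$, and since $0\in T^k_{pr}Q(\bar{x},h_1,\ldots,h_{k-1})$ and $\alpha^{-1}\cdot 0=0$, it follows that $0\in T^k_\alpha Q(\bar{x},h_1,\ldots,h_{k-1})$ for every $\alpha>0$, which is precisely the conclusion. As an equivalent route avoiding Proposition~\ref{pr5a}, one may construct the witnesses for $0\in T^k_\alpha Q(\bar{x},h_1,\ldots,h_{k-1})$ directly: keep $(t_n)$, set $\tau'_n:=\alpha t_n$ and $v_n:=\alpha^{-1}\varepsilon_n w_n$, so that $t_n^{-1}\tau'_n=\alpha$, $v_n\to 0$, and $t_n^{k-1}\tau'_n v_n=t_n^k\varepsilon_n w_n$ reproduces the very same point of $Q$.

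I do not anticipate a genuine obstacle here. The only point requiring care is the elementary but essential remark that $t_n^{-1}\tau_n\to 0$ makes the perturbation $t_n^{k-1}\tau_n w_n$ of order $o(t_n^k)$, which is exactly what collapses the extended-tangent datum for $w$ into a proper-tangent datum for $0$; once this is noted, the remainder is a routine application of the boundedness of convergent sequences together with the already established scaling identity \eqref{e2.3}.
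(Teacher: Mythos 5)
Your proposal is correct and follows essentially the same route as the paper's own proof: both absorb the factor $t_n^{-1}\tau_n \to 0$ into the $t_n^k$ scale (your $v_n = \varepsilon_n w_n$ is exactly the paper's $w'_n = t_n^{-1}\tau_n w_n$) to conclude $0 \in T^k_{pr}Q(\bar{x},h_1,\ldots,h_{k-1})$, and then invoke Proposition~\ref{pr5a}. Your additional direct construction of witnesses for each $\alpha$-slice is a harmless variant of the same idea.
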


\begin{proof} Let $w \in T^k_0 Q(\bar{x},h_1, \ldots,  h_{k-1})$ and let sequences $(t_n),\,(\tau_n) \in S(0)$ and $(w_n) \in U_X(w)$ be such that $t_n^{-1}\tau_n \to 0$ as $n \to \infty$ and $x_n:=\bar{x}+t_nh_1+t^2_nh_2 + \ldots+t_n^{k-1}h_{k-1}+t_n^{k-1}\tau_nw_n\in Q$ for all $n=1,2,\dots$. Then $x_n=\bar{x}+t_nh_1+t^2_nh_2 + \ldots+t_n^{k-1}h_{k-1}+t_n^{k}w'_n\in Q$ for all $n=1,2,\dots$,
where $w'_n = t_n^{-1}\tau_n w_n \to 0$ as $n \to \infty$ and, hence, $0 \in T^k_{pr} Q(\bar{x},h_1, \ldots,  h_{k-1}).$ Through Proposition \ref{pr5a} we conclude that $0 \in T^k_\alpha Q(\bar{x},h_1, \ldots,h_{k-1})$ for all $\alpha\in {\mathbb R}_{++}.$ \end{proof}

\begin{example}\label{ex2}
It was shown in Example \ref{ex1} that for the set $Q =\{(x_1,x_2) \in {\mathbb{R}}^2 \mid x_1 \ge 0, x_1^2 = x_2^3\}$ and the point $\bar{x} = (0,0) \in Q$ one has  $TQ(0) = \{(h_1,h_2) \in {\mathbb{R}}^2 \mid h_1 = 0, h_2 \ge 0\}$ and $T^2_{pr}Q(0,h) = \varnothing$ for all $h \in TQ(0), h \ne 0$. Consequently, through \eqref{e2.3} we get that $T^2_\alpha Q(0,h) = \varnothing$ for all $h \in TQ(0), h \ne 0,$ and for all $\alpha > 0$. Due to Proposition \ref{pr9} the latter equalities imply $T^2_0 Q(0,h) = \varnothing$ for all $h \in TQ(0), h \ne 0.$ At last, it is verified directly that $T^2_\infty Q(0,h) = \{(w_1,w_2) \in {\mathbb{R}}^2 \mid w_1 \ge 0\}$ for all $h \in TQ(0), h \ne 0.$
\end{example}

   \smallskip

\begin{proposition}\label{pr10}
{\it For every} $m\ge 2$ the following inclusions hold:
$$ T^k_\alpha Q(\bar{x},h_1, \ldots,  h_{k-1}) \subset
T^{(k-1)m+1}_0 Q(\bar{x}, \underbrace{ 0, \ldots, 0, h_{1}}\limits_{m},
\ldots,  \underbrace{ 0, \ldots, 0, h_{k-1}}\limits_{m})\,\,\forall\,\,\alpha \in {\mathbb{R}}_{++}\cup\{0\}$$ {\it and}
$$T^{(k-1)m+1}_{\alpha} Q(\bar{x}, \underbrace{ 0, \ldots, 0, h_{1}}\limits_{m},
\ldots,  \underbrace{ 0, \ldots, 0, h_{k-1}}\limits_{m}) \subset
 T^k_\infty Q(\bar{x},h_1, \ldots,  h_{k-1})\quad \forall\alpha \in {\mathbb R}_{++}.$$

 {\it In particular}, $$T^k_\alpha Q(\bar{x},h_1, \ldots,  h_{k-1})\ \subset \ T^{2k-1}_0 Q(\bar{x}, \underbrace{ 0, h_1, 0, h_2, \ldots, 0,
h_{k-1}}\limits_{2k-2})\,\,\forall\,\,\alpha \in {\mathbb{R}}_{++}\cup\{0\}$$ {\it and}
$$ T^{2k-1}_\alpha Q(\bar{x}, 0, h_1, 0,h_2, \ldots, 0,  h_{k-1})\ \subset \
T^{k}_\infty Q(\bar{x}, h_1,  \ldots, h_{k-1})\,\, \forall\alpha \in {\mathbb
R}_{++}.$$

\end{proposition}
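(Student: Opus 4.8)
The plan is to reduce everything to the single change of variables $t = s^m$ (equivalently $s = t^{1/m}$), which is precisely what makes the two listed collections of directions interchangeable. Write $N := (k-1)m + 1$ and let $(g_1,\ldots,g_{N-1})$ denote the padded collection $\underbrace{0,\ldots,0,h_1}_{m},\ldots,\underbrace{0,\ldots,0,h_{k-1}}_{m}$, so that $g_{im} = h_i$ for $i = 1,\ldots,k-1$ and $g_j = 0$ otherwise. The crucial algebraic observation is that for any $s>0$ one has $\sum_{j=1}^{N-1} s^j g_j = \sum_{i=1}^{k-1} s^{im} h_i = \sum_{i=1}^{k-1}(s^m)^i h_i$ and, since $N-1 = (k-1)m$, also $s^{N-1} = (s^m)^{k-1}$. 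Hence, on setting $t := s^m$, both the polynomial part and the leading power of the remainder in the order-$N$ expansion built on $(g_1,\ldots,g_{N-1})$ coincide termwise with those in the order-$k$ expansion built on $(h_1,\ldots,h_{k-1})$. Consequently the membership relation defining a point of $Q$ is literally the same in both pictures; only the slice index, which is governed by a first-power ratio, changes, and tracking that change is the entire content of the proof. The four displayed inclusions are the cases of general $m$ and of $m=2$ of the same two statements, so it suffices to prove the two general ones.

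For the first inclusion I would take $w \in T^k_\alpha Q(\bar{x},h_1,\ldots,h_{k-1})$ with $\alpha \in \mathbb{R}_{++}\cup\{0\}$ and choose $(t_n),(\tau_n)\in S(0)$ with $t_n^{-1}\tau_n \to \alpha$ and $(w_n)\in U_X(w)$ realizing the inclusion of Definition \ref{d4}. Then I put $s_n := t_n^{1/m}$ and keep $\tau_n$ and $w_n$: by the observation above the same points now read as order-$N$ expansions on $(g_1,\ldots,g_{N-1})$ with remainder $s_n^{N-1}\tau_n w_n$, so it only remains to identify the slice. Here $s_n^{-1}\tau_n = (t_n^{-1}\tau_n)\,t_n^{\,1-1/m} \to \alpha\cdot 0 = 0$, because $m\ge 2$ forces $t_n^{\,1-1/m}\to 0$ while $\alpha$ is finite. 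Thus $w \in T^N_0 Q(\bar{x},g_1,\ldots,g_{N-1})$, which is the first inclusion; note that finiteness of $\alpha$ is exactly what rules out the indeterminate product $\infty\cdot 0$ and explains the range $\alpha\in\mathbb{R}_{++}\cup\{0\}$.

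For the second inclusion I would run the substitution in the opposite direction. Take $v\in T^N_\alpha Q(\bar{x},g_1,\ldots,g_{N-1})$ with $\alpha\in\mathbb{R}_{++}$ and choose $(s_n),(\sigma_n)\in S(0)$ with $s_n^{-1}\sigma_n\to\alpha$ and $(v_n)\in U_X(v)$ realizing the inclusion. Put $t_n := s_n^m$ and keep $\sigma_n$ and $v_n$; the same points become order-$k$ expansions on $(h_1,\ldots,h_{k-1})$ with remainder $t_n^{k-1}\sigma_n v_n$. The slice index is now $t_n^{-1}\sigma_n = (s_n^{-1}\sigma_n)\,s_n^{\,1-m}\to \alpha\cdot(+\infty)=+\infty$, since $m\ge 2$ gives $s_n^{\,1-m}\to+\infty$ and $\alpha>0$. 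As $v_n\to v$ in norm, Definition \ref{d4} with $\alpha=+\infty$ applies directly and yields $v\in T^k_\infty Q(\bar{x},h_1,\ldots,h_{k-1})$; here strict positivity of $\alpha$ is what rules out the indeterminate $0\cdot\infty$ and explains the range $\alpha\in\mathbb{R}_{++}$.

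I do not expect a serious obstacle: once the change of variables is in place, both inclusions are pure exponent bookkeeping, and the only point requiring care is the asymmetric way the first-power slice ratio transforms, a benign factor $t^{\,1-1/m}$ collapsing it to $0$ in one direction and a blow-up factor $s^{\,1-m}$ sending it to $+\infty$ in the other, each hinging on the hypothesis $m\ge 2$. The particular cases then follow by setting $m=2$, for which the padded collection is exactly $(0,h_1,0,h_2,\ldots,0,h_{k-1})$ and $N=2k-1$. It is worth recording that Inclusion~1 holds for every finite $\alpha\ge 0$ whereas Inclusion~2 requires $\alpha>0$, precisely because the limiting products $\alpha\cdot 0$ and $\alpha\cdot\infty$ must be determinate.
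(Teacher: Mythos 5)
Your proof is correct and follows essentially the same route as the paper: the substitution $t_n' = t_n^{1/m}$ (padding with zeros) for the first inclusion and $t_n' = t_n^m$ (eliminating null summands) for the second, with the slice index tracked via the factors $t_n^{(m-1)/m} \to 0$ and $t_n^{-(m-1)} \to +\infty$ exactly as in the paper's proof. Your explicit remarks on why finiteness of $\alpha$ is needed in one direction and strict positivity in the other are a nice clarification of points the paper leaves implicit.
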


\begin{proof} Let $w \in T^k_\alpha Q(\bar{x},h_1, \ldots,  h_{k-1}),$ with $\alpha \in {\mathbb{R}}_{++}\cup\{0\},$ and let sequences $(t_n),\,(\tau_n) \in S(0)$ and $(w_n) \in U_X(w)$ be such that $t_n^{-1}\tau_n \to \alpha$ and $x_n :=\bar{x} + t_n h_1 + t_n^2 h_2 + \ldots + t_n^{k-1}h_{k-1} + t_n^{k-1}\tau_n w_n \in Q\,\,\forall\,\,n.$ Setting $t'_n= \sqrt[m]{t_n},$ we obtain $$x_n =\bar{x} + t'_n 0 + \ldots + (t'_n)^{m-1} 0 + (t'_n)^m h_1 + (t'_n)^{m+1} 0 + \ldots + (t'_n)^{2m-1} 0 + (t'_n)^{2m} h_2 + \ldots +$$ $$ + (t'_n)^{(k-2)m+1} 0 + \ldots + (t'_n)^{(k-1)m-1}0 + (t'_n)^{(k-1)m}h_{k-1} + (t'_n)^{(k-1)m} \tau_n w_n \in Q\,\,\forall\,\,n.$$ Since $(t'_n)^{-1}\tau_n = t_n^{\frac{m-1}{m}}(t_n^{-1}\tau_n) \to 0,$ then $w \in T^{(k-1)m+1}_0 Q(\bar{x}, \underbrace{ 0, \ldots, 0, h_{1}}\limits_{m}, \ldots,  \underbrace{ 0, \ldots, 0, h_{k-1}}\limits_{m}).$

To prove the second inclusion we assume that $\alpha \in {\mathbb R}_{++}$ and consider an arbitrary vector $w \in T^{(k-1)m+1}_{\alpha} Q(\bar{x}, \underbrace{ 0, \ldots, 0, h_{1}}\limits_{m},
\ldots,  \underbrace{ 0, \ldots, 0, h_{k-1}}\limits_{m})$ and corresponding sequences $(t_n), (\tau_n) \in S(0)$ such that $(t_n)^{-1}\tau_n \to \alpha$ and $x_n := \bar{x} + t_n 0 + \ldots + t_n^{m-1} 0 + t_n^m h_1 + t_n^{m+1} 0 + \ldots + t_n^{2m-1} 0 + t_n^{2m} h_2 + \ldots +$$ $$ + t_n^{(k-2)m+1} 0 + \ldots + t_n^{(k-1)m-1}0 + t_n^{(k-1)m}h_{k-1} + t_n^{(k-1)m} \tau_n w_n \in Q\,\,\forall\,\,n.$ Setting $t'_n = t_n^m$ and eliminating null summands, we obtain $x_n :=\bar{x} + (t'_n) h_1 + (t'_n)^2 h_2 + \ldots + (t'_n)^{k-1}h_{k-1} + (t'_n)^{k-1}\tau_n w_n \in Q\,\,\forall\,\,n,$ with $(t'_n)^{-1}\tau_n = \displaystyle\frac{1}{t_n^{m-1}}\frac{\tau_n}{t_n} \to + \infty.$ Hence, $w \in T^k_\infty Q(\bar{x},h_1, \ldots,  h_{k-1}).$ \end{proof}

\smallskip

In conclusion of this section, we note that for the first time the definition of the high order extended tangent cone and its slices, as well as a number of their properties, were presented in the author's talk at the French-German-Polish Conference on Optimization. September 9 -- 13, 2002. Cottbus, Germany \cite{Gor02}.
The second order extended tangent cone was introduced and studied in \cite{Gor06b}.

\bigskip

\section{High-order optimality conditions in smooth constrained optimization problems}


In this section our goal is to use the high-order extended tangent cone for deriving high-order necessary conditions for minimizers of the following constrained optimization problem:
$$
 \text{minimize}\,\, f(x)\,\, \text{subject to}\,\, x \in Q,
$$
where $f: X \to {\mathbb{R}}$ is a real-valued function defined on a normed space $X$, and $Q$ is a subset of $X$.

We suppose that the function $f$ is $k$ times Fr\'{e}chet differentiable, where $k \ge 2$.

Recall the definition of Fr\'{e}chet differentiability of high order.

Let $X$ and $Y$ be real normed spaces, and let ${\mathcal{L}}(X,Y)$ be the vector space of linear continuous mapping from $X$ into $Y$ endowed with the norm $\|L\|:=\sup\limits_{\|x\|\le 1}\|Lx\|$.

A mapping $f:X \to Y$ is said to be \textit{Fr\'{e}chet differentiable at a point } $\bar{x} \in X$ if there exists a linear continuous mapping $L \in {\mathcal{L}}(X,\mathbb{R})$ such that
\begin{equation}\label{e3.1}
\lim_{h \to 0}\frac{f(\bar{x}+h)-f(\bar{x}) - Lx}{\|h\|} = 0.
\end{equation}
A linear mapping $L \in {\mathcal{L}}(X,\mathbb{R})$ satisfying \eqref{e3.1} is called \textit{the Fr\'{e}chet derivative of $f$ at $\bar{x}$} and is denoted by $f'(\bar{x}).$

If a mapping $f:X \to Y$ is Fr\'{e}chet differentiable at every point of some neighborhood $U$ of a point $\bar{x}$ we obtain the derived mapping $f': x \to f'(x)$ from $U$ into ${\mathcal{L}}(X,Y)$. In the case when the mapping $f': U \to  {\mathcal{L}}(X,Y)$ is Fr\'{e}chet differentiable at the point $\bar{x}$ the mapping $f$ is said to be \textit{twice Fr\'{e}chet differentiable at} $\bar{x}$
and the derivative $(f')'(\bar{x})$ is called \textit{the second Fr\'{e}chet derivative of $f$ at $\bar{x}$} and is denoted by $f''(\bar{x})$. Note that $f''(\bar{x})$ is a linear mapping from $X$ into ${\mathcal{L}}(X,{\mathcal{L}}(X,Y))$.

For $k > 2$ the $k$-order Fr\'{e}chet derivatives  are defined by induction:
$$
f^{(k)}(x):=(f^{(k-1)})'(x) \in \underbrace{{\mathcal{L}}(X,{\mathcal{L}}(X,\ldots {\mathcal{L}}(X,{\mathcal{L}}(X,Y)))\ldots)}_k.
$$
The $k$-order Fr\'{e}chet derivative $f^{(k)}$ exists at a point $\bar{x}$ if $f'(x),f''(x),\ldots,f^{(k-1)}(x)$ exist for every $x$ in a some neighborhood of $\bar{x}$ and $f^{(k-1)}: x \to f^{(k-1)}(x)$ is Fr\'{e}chet differentiable at $\bar{x}$.


In view of the isometry of the space $\underbrace{{\mathcal{L}}(X,{\mathcal{L}}(X,\ldots {\mathcal{L}}(X,{\mathcal{L}}(X,Y)))\ldots)}_k$ with the space ${\mathcal{L}}(X^k,Y)$ of $k$-linear continuous mappings the $k$-order Fr\'{e}chet derivative $f^{(k)}(\bar{x})$ of $f$ at $\bar{x}$ is identified with the corresponding element of ${\mathcal{L}}(X^k,Y)$. Moreover, the $k$-linear mapping corresponding the $k$-order Fr\'{e}chet derivative $f^{(k)}(\bar{x})$ is symmetric.

Recall that a mapping $T:X^k \to Y$ is called $k$-linear if the mappings $$X \ni z \to  T(x_1,\ldots,x_{i-1},z,x_{i+1},\ldots, x_k) \in Y,i =1,\ldots,k,$$ are linear for any fixed $x_1,\ldots,x_{i-1},x_{i+1},\ldots, x_k \in X$. A $k$-linear mapping $T:X^k \to Y$ is symmetric if $T(x_1,\ldots,x_k)$ does not change its value for any permutation of the arguments $x_1,\ldots,x_k.$

For any $k$-linear symmetric mapping $T:X^k \to Y$ by $T[x_1]^{\alpha_1}\ldots[x_\mu]^{\alpha_\mu}$, where $\alpha_i$ are nonnegative integers such that $\alpha_1+\ldots+\alpha_\mu = k$, we denote the value of this mapping when $\alpha_1$ of its arguments are equal to $x_1$, $\alpha_2$ of its arguments are equal to $x_2$, $\ldots$, $\alpha_\mu$ of its arguments equal to $x_\mu$.

\medskip

A point $\bar{x} \in X$ is called \textit{a local minimizer of $f$ over $Q$} if $\bar{x} \in Q$ and there exists a positive real $\delta > 0$ such that $f(\bar{x}) \le f(x)$ for all $x \in Q\cap B_\delta(\bar{x})$, where $B_\delta(\bar{x}):= \{x \in X \mid \|x - \bar{x}\| \le \delta\}.$

It is well known that if $\bar{x} \in X$ is a local minimizer of a Fr\'{e}chet differentiable function $f$ over a set $Q \subseteq X$ then $f'(\bar{x})h \ge 0$ for all $h \in TQ(\bar{x})$.

In the next theorem we present the high order necessary conditions for local minimizers of smooth functions.

{\bf Theorem 1.} {\it Let a function $f:X \to {\mathbb{R}}$ be $k$ times Fr\'{e}chet differentiable at a point $\bar{x} \in X$, where $k \ge 2$. If the point $\bar{x} \in X$ is a local minimizer of $f$ over a set $Q \subseteq X$ then for any ordered collection of directions $(h_1,h_2,\ldots,h_{k-1}) \in X^{k-1}$ such that $T^kQ(\bar{x},h_1,\ldots,h_{k-1}) \ne \varnothing$ and
\begin{equation}\label{e3.2}
\sum\limits_{\alpha_1+2\alpha_2+\ldots+s\alpha_s=s}\frac{1}{\alpha_1!\ldots\alpha_s!}f^{(\alpha_1+\ldots+\alpha_s)}(\bar{x})[h_1]^{\alpha_1}\ldots[h_s]^{\alpha_s} = 0, \,\, s=1,2,\ldots,k-1,
\end{equation}
one has
\begin{multline}\label{e3.3}
f'(\bar{x})w + \sum\limits_{\alpha_1+2\alpha_2+\ldots+(k-1)\alpha_{k-1}=k}\frac{1}{\alpha_1!\ldots\alpha_{k-1}!}f^{(\alpha_1+\ldots+\alpha_{k-1})}(\bar{x})[h_1]^{\alpha_1}\ldots[h_{k-1}]^{\alpha_{k-1}} \ge 0 \\ \text{for all}\,\, w \in T^k_{pr}Q(\bar{x},h_1,\ldots,h_{k-1})
\end{multline}
and
\begin{equation}\label{e3.4}
    f'(\bar{x})w \ge 0\,\,\text{for all}\,\,w \in T^k_{\infty}Q(\bar{x},h_1,\ldots,h_{k-1}).
\end{equation}
}

\begin{proof} Let $w \in T^kQ(\bar{x},h_1,\ldots,h_{k-1})$. Then there exist sequences $(t_n),(\tau_n) \in S(0)$ and $(w_n) \in U(w)$ such that $x_n:=\bar{x} +t_nh_1+t^2_nh_2+\ldots+t_n^{k-1}h_{k-1}+ t_n^{k-1}\tau_nw_n \in Q\,\,\forall\,\,n \in {\mathbb{N}}$. Clearly, $x_n \to \bar{x}$ as $n \to \infty$. Hence, if $\bar{x}$ is a local minimizer of $f$ over $Q$, one has $f(x_n) - f(\bar{x}) \ge 0$ for sufficiently large $n$. Using the Taylor formula we obtain
$$f(x_n) - f(\bar{x}) = f(\bar{x} +t_nh_1+t^2_nh_2+\ldots+t_n^{k-1}h_{k-1}+ t_n^{k-1}\tau_nw_n) - f(\bar{x}) =$$ $$\sum\limits_{s=1}^{k-1}t^s\sum\limits_{\alpha_1+2\alpha_2+\ldots+s\alpha_s=s}\frac{1}{\alpha_1!\ldots\alpha_s!}f^{(\alpha_1+\ldots+\alpha_s)}(\bar{x})[h_1]^{\alpha_1}\ldots[h_s]^{\alpha_s}+$$
$$ t_n^k\left(\frac{\tau_n}{t_n}f'(\bar{x})w_n + \sum\limits_{\alpha_1+2\alpha_2+\ldots+(k-1)\alpha_{k-1}=k}\frac{1}{\alpha_1!\ldots\alpha_{k-1}!}f^{(\alpha_1+\ldots+\alpha_{k-1})}(\bar{x})[h_1]^{\alpha_1}\ldots[h_{k-1}]^{\alpha_{k-1}}\right) + $$ $$+ t_n^kv_n \ge 0,\,\,\text{where}\,\,v_n \to 0\,\,\text{as}\,\,n \to \infty.$$
Taking into account the equalities \eqref{e3.2} and dividing by $t_n^k$ we get
\begin{equation}\label{e3.5}
\frac{\tau_n}{t_n}f'(\bar{x})w_n + \sum\limits_{\alpha_1+2\alpha_2+\ldots+(k-1)\alpha_{k-1}=k}\frac{1}{\alpha_1!\ldots\alpha_{k-1}!}f^{(\alpha_1+\ldots+\alpha_{k-1})}(\bar{x})[h_1]^{\alpha_1}\ldots[h_{k-1}]^{\alpha_{k-1}} + v_n \ge 0.
\end{equation}
Assume that $w \in T^k_{pr}Q(\bar{x},h_1,\ldots,h_{k-1})$. Then $t_n^{-1}\tau_n \to 1$ as $n \to \infty$ and we come from the inequality \eqref{e3.5} to \eqref{e3.3}.

If $w \in T^k_{\infty}Q(\bar{x},h_1,\ldots,h_{k-1})$ then $t_n^{-1}\tau_n \to +\infty$ and we get from \eqref{e3.5} that $f''(\bar{x})w \ge 0.$ This proves the inequality \eqref{e3.4}.

Before completing the proof, we note that the cases when $w \in T^k_{\alpha}Q(\bar{x},h_1,\ldots,h_{k-1})$ with $\alpha > 0,\alpha \ne 1,$ and when $w \in T^k_0Q(\bar{x},h_1,\ldots,h_{k-1})$ produce the conditions that are already included in \eqref{e3.4}. Really, it follows from Proposition \ref{pr5a} that $w \in T^k_{\alpha}Q(\bar{x},h_1,\ldots,h_{k-1})\,\,\Rightarrow\,\, \alpha w \in T^k_{pr}Q(\bar{x},h_1,\ldots,h_{k-1})$. In the second case, due to Proposition \ref{pr9} we have $T^k_0Q(\bar{x},h_1,\ldots,h_{k-1}) \ne \varnothing\,\,\Rightarrow \,\,0 \in T^k_{pr}Q(\bar{x},h_1,\ldots,h_{k-1}).$
\end{proof}

\begin{corollary}{\rm (cf. \cite{CMV,Penot94,Penot98})}
 Let a function $f:X \to {\mathbb{R}}$ be twice Fr\'{e}chet differentiable at a point $\bar{x} \in X$. If the point $\bar{x}$ is a local minimizer of $f$ over a set $Q \subseteq X$ then for any $h \in X$ such that $T^2Q(\bar{x},h) \ne \varnothing$ and $f'(\bar{x})h = 0$ one has
\begin{equation}\label{e3.6}
f'(\bar{x})w +\frac{1}{2}f''(\bar{x})[h]^2 \ge 0\,\,\forall\,\,w \in T^2_{pr}Q(\bar{x},h)
\end{equation}
and
\begin{equation}\label{e3.7}
f'(\bar{x})w \ge 0 \,\,\forall\,\,w \in T^2_{\infty}Q(\bar{x},h).
\end{equation}

\end{corollary}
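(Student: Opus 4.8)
The plan is to obtain this corollary as the special case $k=2$ of Theorem~1, with the ordered collection of directions reduced to the single vector $h_1 = h$. First I would check that the hypotheses specialize correctly. When $k=2$, the family of equalities \eqref{e3.2} ranges only over $s=1$, and for $s=1$ the sole multi-index satisfying $\alpha_1 = 1$ yields $f^{(1)}(\bar{x})[h]^1 = f'(\bar{x})h$; thus \eqref{e3.2} collapses to the single scalar requirement $f'(\bar{x})h = 0$, which is precisely the condition assumed in the corollary. Likewise, the nonemptiness hypothesis $T^2Q(\bar{x},h) \neq \varnothing$ is exactly the hypothesis $T^kQ(\bar{x},h_1,\ldots,h_{k-1}) \neq \varnothing$ of Theorem~1 for $k=2$.

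Next I would verify that the two conclusions reduce as claimed. In \eqref{e3.3} with $k=2$, the summation runs over multi-indices with $\alpha_1 = k = 2$, so the only surviving term is $\frac{1}{2!}f''(\bar{x})[h]^2 = \frac{1}{2}f''(\bar{x})[h]^2$; hence \eqref{e3.3} becomes $f'(\bar{x})w + \frac{1}{2}f''(\bar{x})[h]^2 \geq 0$ for all $w \in T^2_{pr}Q(\bar{x},h)$, which is exactly \eqref{e3.6}. In the same way \eqref{e3.4} becomes $f'(\bar{x})w \geq 0$ for all $w \in T^2_{\infty}Q(\bar{x},h)$, which is \eqref{e3.7}. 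Note also that the differentiability assumption matches: $f$ twice Fr\'{e}chet differentiable at $\bar{x}$ is the $k=2$ instance of $f$ being $k$ times Fr\'{e}chet differentiable.

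The main (and essentially only) point requiring care is the bookkeeping of the multi-index sums, namely confirming that for $k=2$ the index sets in \eqref{e3.2} and \eqref{e3.3} each collapse to a single term. Since this is a routine check of the Taylor-coefficient indexing already fixed before Theorem~1, no further argument is needed, and the corollary follows at once by invoking Theorem~1 with $k=2$ and $h_1 = h$.
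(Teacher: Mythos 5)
Your proposal is correct and matches the paper's intent exactly: the corollary is stated there without a separate proof, precisely because it is the immediate specialization of Theorem~1 to $k=2$ with $h_1=h$, and your verification that \eqref{e3.2} collapses to $f'(\bar{x})h=0$ and that \eqref{e3.3}, \eqref{e3.4} reduce to \eqref{e3.6}, \eqref{e3.7} is the whole content of that specialization.
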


\begin{corollary}{\rm (cf. \cite{Constantin})}
Let a function $f:X \to {\mathbb{R}}$ be three times Fr\'{e}chet differentiable at a point $\bar{x} \in X$. If the point $\bar{x}$ is a local minimizer of $f$ over a set $Q \subseteq X$ then for any $h_1,h_2 \in X$ such that $T^3Q(\bar{x},h_1,h_2) \ne \varnothing$ and $f'(\bar{x})h_1 = 0,$ $f'(\bar{x})h_2 +\frac{1}{2}f''(\bar{x})[h_1]^2 = 0$ one has
$$
f'(\bar{x})w + f''(\bar{x})[h_1][h_2] + \frac{1}{3!}f'''(\bar{x})[h_1]^3 \ge 0\,\,\forall\,\,w \in T^3_{pr}Q(\bar{x},h_1,h_2)
$$
and
$$
f'(\bar{x})w \ge 0 \,\,\forall\,\,w \in T^3_{\infty}Q(\bar{x},h_1,h_2).
$$
\end{corollary}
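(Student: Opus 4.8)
The plan is to obtain this statement as the special case $k=3$ of Theorem~1, so the entire argument reduces to specializing the abstract multinomial conditions \eqref{e3.2}, \eqref{e3.3} and \eqref{e3.4} to $k=3$ and checking term-by-term that they coincide with the explicit third-order relations in the corollary. First I would record that, with $k=3$, the hypotheses of Theorem~1 consist of the vanishing conditions \eqref{e3.2} for $s=1$ and $s=2$ together with the nonemptiness assumption, all of which are assumed in the corollary ($T^3Q(\bar{x},h_1,h_2)\neq\varnothing$), so Theorem~1 is directly applicable once the index sums are matched.

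For $s=1$ the index set $\{\alpha_1 : \alpha_1 = 1\}$ is a singleton, and \eqref{e3.2} collapses to $f'(\bar{x})h_1 = 0$, which is precisely the first stated hypothesis. For $s=2$ the nonnegative-integer solutions of $\alpha_1 + 2\alpha_2 = 2$ are $(\alpha_1,\alpha_2) = (2,0)$ and $(0,1)$; inserting these into \eqref{e3.2} with the coefficients $1/(\alpha_1!\,\alpha_2!)$ yields the two summands $\tfrac{1}{2}f''(\bar{x})[h_1]^2$ and $f'(\bar{x})h_2$, whose sum is required to be zero. This is exactly the second stated hypothesis $f'(\bar{x})h_2 + \tfrac{1}{2}f''(\bar{x})[h_1]^2 = 0$.

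Next I would verify that the two conclusions match. The sum in \eqref{e3.3} ranges over $\alpha_1 + 2\alpha_2 = 3$, whose nonnegative-integer solutions are $(3,0)$ and $(1,1)$; these contribute $\tfrac{1}{3!}f'''(\bar{x})[h_1]^3$ and $f''(\bar{x})[h_1][h_2]$ respectively. Hence \eqref{e3.3} becomes $f'(\bar{x})w + f''(\bar{x})[h_1][h_2] + \tfrac{1}{3!}f'''(\bar{x})[h_1]^3 \ge 0$ for all $w \in T^3_{pr}Q(\bar{x},h_1,h_2)$, which is the first claimed inequality, while \eqref{e3.4} reads $f'(\bar{x})w \ge 0$ for all $w \in T^3_{\infty}Q(\bar{x},h_1,h_2)$, the second. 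Thus both inequalities follow from Theorem~1 applied at $k=3$.

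There is no substantive obstacle in this corollary; the only point requiring attention is the correct enumeration of the finite index sets $\{(\alpha_1,\alpha_2) \in \mathbb{Z}_{\ge 0}^2 : \alpha_1 + 2\alpha_2 = s\}$ for $s \in \{1,2,3\}$ and the accompanying bookkeeping of the factorial weights $1/(\alpha_1!\,\alpha_2!)$. I would simply tabulate these three cases to confirm that each surviving coefficient agrees with the one written in the corollary, after which invoking Theorem~1 completes the argument.
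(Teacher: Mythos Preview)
Your proposal is correct and is exactly the approach implicit in the paper: the corollary is stated without a separate proof because it is simply the specialization of Theorem~1 to $k=3$, and your enumeration of the index sets and factorial coefficients is accurate.
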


\begin{example}
Consider the problem of minimizing the function $f(x_1,x_2) = -x_1+x_2^3$ over the set $Q =\{(x_1,x_2) \in {\mathbb{R}}^2 \mid x_1 \ge 0, x_1^2 = x_2^3\}$. As it was shown in Examples \ref{ex1} and \ref{ex2} for the feasible point $\bar{x} = (0,0) \in Q$ we have $TQ(0) = \{(h_1,h_2) \in {\mathbb{R}}^2 \mid h_1 = 0, h_2 \ge 0\}$, $T^2_{pr}Q(0,h) = \varnothing$ and $T^2_\infty Q(0,h) = \{(w_1,w_2) \in {\mathbb{R}}^2 \mid w_1 \ge 0\}$ for all $h \in TQ(0), h \ne 0$. Since $f'(0)= (-1,0)$ then $f'(0)h = 0$ for all $h \in TQ(0)$. The condition \eqref{e3.6} holds trivially because $T^2_{pr}Q(0,h) = \varnothing$ for all $h \in TQ(0), h \ne 0$. However, $f'(0)w = -w_1 < 0$ for all $ w \in T^2_\infty Q(0,h), w \ne 0$, and, consequently, the condition \eqref{e3.7} does not hold. Thus, the point $\bar{x} = (0,0) \in Q$ is not a local minimizer of $f$ over $Q$.
\end{example}

\vskip 6mm
\noindent{\bf Acknowledgements}

\noindent
The author was supported by the National Program for Scientific Research of
the Republic of Belarus for 2021--2025 “Convergence--2025”, project No.~1.3.01.

\end{document}